\numberwithin{equation}{section}
\theoremstyle{plain}
\newtheorem{theorem}{Theorem}[section]
\newtheorem{proposition}[theorem]{Proposition}
\newtheorem*{mtheorem}{Main Theorem}
\theoremstyle{definition}
\newtheorem{remark}[theorem]{Remark}
\newtheorem{open.problem}[theorem]{Open Problem}
\newcommand{\N}{\mathbb{N}}
\newcommand{\R}{\mathbb{R}}
\title[Local Regularity of very weak $s$-harmonic functions]{Local Regularity of very weak $s$-harmonic functions via fractional difference quotients}
\author[A.\ Carbotti]{Alessandro Carbotti}
\address{Dipartimento di Matematica
	e Fisica ``E. De Giorgi'', Universit\`a del Salento,
	Via Per Arnesano, 73100 Lecce, Italy.}
\email{alessandro.carbotti@unisalento.it}
\author[S.\ Cito]{Simone Cito}
\address{Dipartimento di Matematica
	e Fisica ``E. De Giorgi'', Universit\`a del Salento,
	Via Per Arnesano, 73100 Lecce, Italy.}
\email{simone.cito@unisalento.it}
\author[D. A. \ La Manna]{Domenico Angelo La Manna}
\address{Dipartimento di Matematica e Applicazioni ``R. Caccioppoli'', Universit\`a degli studi di Napoli
	``Federico II'', Complesso di Monte Sant'Angelo, Via Cintia, 80126 Naples, Italy.}
\email{domenicoangelo.lamanna@unina.it}
\author[D.\ Pallara]{Diego Pallara}
\address{Dipartimento di Matematica
	e Fisica ``E. De Giorgi'', Universit\`a del Salento, and INFN, Sezione di Lecce,
	Via Per Arnesano, 73100 Lecce, Italy.}
\email{diego.pallara@unisalento.it}
\date{\today}  \linespread{1.2}
\keywords{Fractional Operators, Sobolev regularity, $s$-harmonic functions}
\subjclass[2010]{35R11, 35B65}
\begin{document}
		\begin{abstract}
The aim of this paper is to give a new proof that any very weak $s$-harmonic function $u$ in the unit ball $B$ is smooth. As a first step, we improve the local summability properties of $u$. Then, we exploit a suitable version of the difference quotient method tailored to get rid of the singularity of the integral kernel and gain Sobolev regularity and local linear estimates of the $H^{s}_{\rm loc}$ norm of $u$. Finally, by applying more standard methods, such as elliptic regularity and Schauder estimates, we reach real analyticity of $u$. Up to the authors' knowledge, the difference quotient techniques are new.
	\end{abstract}
	
	\maketitle
	
	\section{Introduction}

This paper comes from our attempt to generalize the by now classical difference quotient method due to L. Nirenberg to nonlocal operators. It has been introduced in \cite{Nirenberg} and is now presented in all the textbooks dealing with regularity properties of solutions of elliptic equations. After the introduction of weak, or even distributional, solutions of partial differential equations, the problem of their regularity has been tackled by various techniques. Probably the first result in this direction has been the proof of regularity of weakly harmonic functions, obtained in the fifties by Hermann Weyl in \cite{weyl} and by Renato Caccioppoli, see \cite[page 122]{Miranda}. Subsequently, much more general operators have been considered and one of the most fruitful and flexible techniques has proved to be that of difference quotients, which - as it is - appears to be strictly depending on the {\em local} character of differential operators. We refer e.g. to  \cite{LLS, zhangbao} and the references therein for recent regularity results relying on Nirenberg method. 

On the other hand, the notion of distributional solution is well established also for equations coming from {\em nonlocal} operators and the question on the regularity of such solutions is in turn quite natural. One of the first examples of nonlocal operator, and probably the simplest one, is the fractional power of the laplacian and solutions of the equation $(-\Delta)^su=0$, $s\in(0,1)$, are called $s$-harmonic. The class of $s$-harmonic functions has been broadly studied in the last years. Though they have many features relating them with harmonic functions, see for instance \cites{fallweth, DGV}, $s$-harmonic functions exhibit a different behaviour in other aspects, due to the nonlocal nature of the fractional Laplacian. Among these facts we mention the local density of $s$-harmonic functions among smooth functions (\cite{DSV17}), a purely nonlocal phenomenon that has some interesting conse\-quences, such as the failure of the classical Harnack inequality and a quantitative version of an inverse mean value formula in the fractional case. See \cites{kassmann, BDV} for more precise statements, \cite{CDV19} for more general density results and \cites{abaval, bucval} for other applications. 

There are several equivalent ways of defining $(-\Delta)^s$, see \cite{kwasnicki}, and the first proof of local regularity of $s$-harmonic distributions has been obtained via pseudodifferential techniques by R. T. Seeley \cite{Seeley1}. See \cite{Seeley2} and \cite{Seeley3} for more general operators. The Dirichlet problem for the fractional Laplacian
	 \begin{equation}
	 \label{eq:dualproblem}
	 \begin{cases}
	 (-\Delta)^s w=f\quad\text{in}\quad\Omega \\
	 w=0\quad\text{in}\quad\Omega^c
	 \end{cases}
	 \end{equation}
	 has been studied in \cites{LPPS, rosotonserra13} providing basic summability estimates according to the summability of the source $f$ and boundary regularity, respectively. 
	 We notice that when $\Omega$ is the whole of $\R^N$ more than $H^s(\R^N)$ regularity is available for weak solution of $(-\Delta)^sw=f$, see \cite{KMS}.
	 
It seemed to us that a natural question about the regularity of very weak solutions of nonlocal equations, see \eqref{eq:defveryweaksol} for the precise definition in the case of the fractional laplacian, is if it is possible to extend the difference quotient method to such solutions. As a first check, we have considered the operator $(-\Delta)^s$, where the difficulty of getting rid of the singularity of the kernel in its definition, see \eqref{eq:fractionallaplacian} below, already appears. 
  
Very weak solutions of \eqref{eq:dualproblem} have been treated in \cite{abdperal}, where the authors observe that such solutions, when $\Omega$ is a bounded smooth domain and $f\in L^1(\Omega)$, are actually pointwise solutions, that is they are given in terms of the Green operator applied to the source $f$. See \cite{bucur}, where explicit representation formulae when $\Omega$ is a ball are given. We also mention that in the fractional setting maximal regularity estimates for weak solutions of \eqref{eq:dualproblem} are also available: in \cite{grubb} the author proves that the solution of \eqref{eq:dualproblem} with $f\in L^p(\Omega)$ belongs to the  fractional Sobolev space $W^{2s,p}_{\text{loc}}(\Omega)$, or to the Besov space $B^{2s}_{p,2, \text{loc}}(\Omega)$ according to the values of $s$ and $p$, and this regularity is sharp since does not hold true up to the boundary. See also \cite{BWZ, BWZa}. Anyway, our arguments do not rely on the estimates proved there. 

We consider very weak solutions of the following problem
	 \begin{equation}
	 \label{eq:maineq}
	 \begin{cases}
	 (-\Delta)^s w=0\quad\text{in}\quad B \\
	 w=g\quad\text{in}\quad B^c,
	 \end{cases}
	 \end{equation}
	  where $B$ is the unit ball in $\R^N$ and the outer datum $g$ belongs to the weighted Lebesgue space $L^1_s(\R^N)$ that is defined in \eqref{defL1s}. See formula \eqref{eq:defveryweaksol} for the precise definition. Notice that our assumptions allow to deal with also with $s$-harmonic functions that blow up at the boundary of $B$, see \cites{abatangelo, BBKRSV}. If the external datum is assumed to be bounded, one can also exploit the explicit representation of the solution in terms of the fractional Poisson kernel, see \cites{BogByc, bucur, rofr}. 

Our main result is the following 
	 \begin{mtheorem}
	 	\label{th:maintheorem}
	 	Let $u$ be a very weak solution of \eqref{eq:maineq}. Then
	 	\begin{enumerate}
	 	\item (Sobolev regularity) $u\in H^s_{\rm loc}(B)$ and the estimate 
	 	$$
	 	\left\|u\right\|_{H^{s}(B')}\le c(B')\left\|u\right\|_{L^1_s(\R^N)}
	 	$$
	 	holds for every $B'\Subset B$.
	 	\item (Classical regularity) $u$ is real analytic in $B$ and the estimate
	 	\[
	 	\left\|D^\iota u\right\|_{L^\infty(B_{r_0})}\le c^{|\iota|}\iota!C(R,r_0,N,s)\left(\left\|u\right\|_{L^\infty(B_R)}+\left\|u\right\|_{L^1_s(\R^N)}\right)
	 	\]
	 	holds for any $\iota\in\N_0^N$ and $0<r_0<R<1$.
	 	\end{enumerate}
	 \end{mtheorem}
	 We prove our Main Theorem in several steps. In Theorem \ref{th:summabilityimprovement} we prove that the solution $u$, which is by definition in $L^1(B)$, is actually in $L^2_{\text{loc}}(B)$. This local improvement of summability is done by suitable localisation methods joint with regularity results for the fractional Poisson equation with homogeneous external condition proved in \cites{afly, rosotonserra13}.   The second step is done in Theorem \ref{th:sobolevregularity}, where we prove that the $L^2_{\rm loc}(B)$ solution $u$ belongs to the fractional Sobolev space $H^{2s}_{\text{loc}}(B)$, see Section \ref{sec:notandprel} for the relevant definitions. The main point in the proof of this result consists in showing the $H^{s}_{\text{loc}}(B)$ regularity, as the final step from $H^{s}_{\text{loc}}(B)$ to $H^{2s}_{\text{loc}}(B)$ follows from $L^2$ estimates on the operator $I_s$ which is the {\em carré du champ} of the fractional Laplacian that arises using the relevant fractional Leibniz rule. This kind of estimates, that we also prove for every $p\ge 1$, are different with respect to the one proved in \cite{KePoVe}. We exploit a suitable variant of the classical Nirenberg difference quotient method: we introduce a cut-off function that vanishes near the origin and allows us to get rid of the singularity of the kernel and to obtain the fractional Sobolev regularity $H^s_{\text{loc}}(B)$. Difference quotient methods have been used in \cite{AFV, BrLi, CafSil, cozzi} in a different fashion to improve regularity of solutions to more general nonlocal equations. We point out once more that the core of the paper are the linear estimates and the new techniques introduced to prove Claim 1 in the Main Theorem.
	 
	 In the third step we prove that for a cut-off function $\eta$ the function $\eta^2u$ solves the equation $(-\Delta)^sw=f$ in the whole space, with $f\in L^2(\R^N)$, and as a consequence $u$ belongs to $H^{2s}_{\rm loc}(B)$.
	 
	 The fourth step relies on fractional De Giorgi estimates proved in \cite{BrLiSc} that allow us to gain local boundedness of $u$ in $B_1$ and also local H\"older regularity. The usefulness of those estimates with respect to the previous literature (see e.g.~\cite{kassm07, Kass}) leans on the fact that the H\"older continuity exponent is quantified. Namely the authors prove that $u\in C^{0,\gamma}_{\rm loc}(B_1)$ for every $\gamma\in\left(0,\min\{2s,1\}\right)$.
	 
	 In the fifth step we use again that $\eta^2u$ solves the equation $(-\Delta)^sw=f$ in $\R^N$ but with $f\in C^{0,\gamma}(\R^N)$ and as a consequence $u$ belongs to $C^{\gamma+2s}_{\rm loc}(B)$ and is $s$-harmonic in the classical sense in $B$. To do this we also exploit suitable H\"older continuity properties of the operator $I_s$.
	 
	 In the last step we use the explicit Poisson representation formula to give a pointwise expression for $u$ in a small ball. By well known estimates on the derivatives of the Poisson kernel we conclude our proof by proving the real analyticity of $u$.
	 
Finally, we notice that all our results are stated and proved using the unit ball as reference domain in order to avoid technical issues and to focus on the core of our strategy
 though the same results also hold true for every bounded and sufficiently smooth domain.

\noindent
\paragraph*{\bf Acknowledgements}  
The authors are grateful to Proff. Gerd Grubb, Xavier Ros-Oton and Enrico Valdinoci for their interest in the paper and their comments in several fruitful discussions. 

The authors are member of GNAMPA of the Istituto Nazionale di Alta Matematica (INdAM). 
A.C. and S.C., D.A.L., D.P. respectively acknowledge the support of the INdAM - GNAMPA 2023 Projects ``Problemi variazionali per funzionali e operatori non-locali'', ``Disuguaglianze isoperimetriche e spettrali'', ``Equazioni differenziali stocastiche e operatori di Kolmogorov in dimensione infinita''. A.C., S.C. and D.A.L. acknowledge the support of the INdAM - GNAMPA 2024 Project ``Ottimizzazione e disuguaglianze funzionali per problemi geometrico-spettrali locali e nonlocali''.
A.C., S.C. and D.P. have been also partially supported by the PRIN 2022 project 20223L2NWK. D.A.L. has been also partially supported by the PRIN 2022 project 2022E9CF89.

\noindent
{\bf Data Availability Statement} Data sharing not applicable to this article as no datasets were generated or analysed during the current study.
	
	\section{Notations and Preliminary results}
	\label{sec:notandprel}
In the whole paper we always assume that $N\ge 2$.

The space $L^1_s(\R^N)$ is the weighted Lebesgue space defined as
\begin{equation}\label{defL1s}
L^1_s(\R^N):=\left\{u\in\mathcal{M}(\R^N);\quad\left\|u\right\|_{L^1_s(\R^N)}<\infty\right\},
\end{equation}
where $\mathcal{M}(\R^N)$ denotes the space of Lebesgue measurable functions on $\R^N$ and 
$$
\left\|u\right\|_{L^1_s(\R^N)}:=\int_{\R^N}\frac{|u(x)|}{1+|x|^{N+2s}}dx.
$$ 
It is very easy to check that  $L^p(\R^N)\subsetneq L^1_s(\R^N)\subsetneq L^1_{\text{loc}}(\R^N)$ for every $p\ge 1$. 

The space $L^1_s(\R^N)$ is a natural setting for very weak $s$-harmonic functions. Indeed, it encodes local integrability and a growth condition at infinity. This is equivalent to requiring that the nonlocal tail of $u$
$$
\text{Tail}(u;x_0,R):=R^{2s}\int_{B^c_R(x_0)}\frac{|u(x)|}{|x-x_0|^{N+2s}}dx
$$
is finite for every $x_0\in\R^N$ and $R>0$.

For $u\in C_{\text{loc}}^{2s+\gamma}(B)\cap L^1_s(\R^N)$, $\gamma\in(0,1)$, the $s$-Laplacian $(-\Delta)^s u$ is pointwise defined for every $x\in B$ and the following representation formula holds
\begin{equation}
\label{eq:fractionallaplacian}
(-\Delta)^su(x)=C_{N,s}\int_{\R^N}\frac{u(x)-u(y)}{|x-y|^{N+2s}}dy=\frac{C_{N,s}}{2}\int_{\R^N}\frac{2u(x)-u(x+y)-u(x-y)}{|y|^{N+2s}}dy,
\end{equation}
where $C_{N,s}:=\frac{s2^{2s}\Gamma\left(\frac N2+s\right)}{\pi^{N/2}\Gamma(1-s)}$ and $\Gamma$ denotes the Euler Gamma function. This choice of the normalisation constant makes the fractional Laplacian a Fourier multiplier with symbol $|\cdot|^{2s}$ whenever for $u\in L^1(\R^N)$ the Fourier transform $\mathcal{F}$ is defined as $\mathcal{F}u(\xi)=\int_{\R^N}u(x)e^{-2\pi ix\cdot\xi}dx$. 

Notice that if $u$ only belongs to $L^1_s(\R^N)$, formula \eqref{eq:fractionallaplacian} still holds true by taking the integrals in the Cauchy principal value sense.

For $s\in(0,1)$, $1\le p<\infty$ and $\Omega\subseteq\R^N$ we define the fractional Sobolev space $W^{s,p}(\Omega)$ as
$$
W^{s,p}(\Omega)=\left\{u\in L^p(\Omega):\quad [u]_{W^{s,p}(\Omega)}<\infty\right\},
$$
where
$$
[u]_{W^{s,p}(\Omega)}:=\left(\int_\Omega dx\int_\Omega\frac{|u(x)-u(y)|^p}{|x-y|^{N+sp}}dy\right)^{1/p},
$$
endowed with the norm $\left\|\cdot\right\|_{W^{s,p}(\Omega)}:=\left(\left\|\cdot\right\|^p_{L^p(\Omega)}+[\cdot]^p_{W^{s,p}(\Omega)}\right)^{1/p}$. 

When $p=\infty$ any $f\in W^{s,\infty}(\Omega)$ has a representative $\tilde{f}\in C^s(\overline{\Omega})$.

As usual, when $p=2$ we use the notation $H^s(\Omega)$ to indicate the Hilbert space $W^{s,2}(\Omega)$. See \cite{DPV} for a gentle introduction to the fractional Sobolev spaces.

Let us also define higher order fractional Sobolev spaces, confining to the non-integer case: for $\sigma\in(1,\infty)$, $\sigma=k+s$, $k\in\N$, $s\in(0,1)$ and $1\le p<\infty$, the fractional Sobolev space $W^{\sigma,p}(\Omega)$ is defined as follows:
$$
W^{\sigma,p}(\Omega)=\left\{u\in W^{k,p}(\Omega):\quad D^\alpha u\in W^{s,p}(\Omega),\quad\forall\ \alpha\in\N_0^N,\ |\alpha|\leq k\right\}.
$$
Set $\mathcal{Q}_A:=\left(A\times A\right)\cup\left(A\times A^c\right)\cup\left(A^c\times A\right)$ for every open set $A$. We define
\begin{equation}\label{defH2sLoc}
\mathbb{H}^s(B)=\left\{u\in L^2(B):\quad [u]_{\mathbb{H}^s(B)}<\infty\right\},
\end{equation}
where
$$
[u]_{\mathbb{H}^s(B)}:=\Bigl(\iint_{\mathcal{Q}_B}\frac{|u(x)-u(y)|^2}{|x-y|^{N+2s}}dxdy\Bigr)^{1/2}.
$$
We say that $u\in \mathbb{H}^s(B)$ is a weak solution for \eqref{eq:maineq} if for every $\varphi\in \mathbb{H}^s_0(B)=H^s_0(B)=\overline{C^\infty_c(B)}^{\left\|\cdot\right\|_{H^s(B)}}$ it holds that
$$
\begin{cases*}
\displaystyle\int_{\R^N}dx\int_{\R^N}\frac{(u(x)-u(y))(\varphi(x)-\varphi(y))}{|x-y|^{N+2s}}dy=0 \\
u=g\quad\text{in}\quad B^c.
\end{cases*}
$$
Notice that for $u\in\mathbb{H}^s(B)$ and $\varphi\in\mathbb{H}_0^s(B)$ the definition is well posed. Indeed, let $A\Subset B$, $A\supset \text{supp}\, \varphi$
\begin{align*}
\left|\int_{\R^N}dx\int_{\R^N}
\frac{(u(x)-u(y))(\varphi(x)-\varphi(y))}{|x-y|^{N+2s}}dy\right|&\leq\iint_{\mathcal{Q}_A}
\frac{|u(x)-u(y)||\varphi(x)-\varphi(y)|}{|x-y|^{N+2s}}dxdy
\\
&\leq [u]_{\mathbb{H}^s(A)}[\varphi]_{\mathbb{H}^s(A)}.
\end{align*}

We notice that if $g\in C(\R^N)\cap L^\infty(\R^N)$ and $u\in\mathbb{H}^s(B)\cap L^\infty(\R^N)$ is a weak solution of \eqref{eq:maineq} then $u$ is also a solution in the viscosity sense for \eqref{eq:maineq}, as proved in \cite[Theorem 1]{SerVal} also in the inhomogeneous case for continuous sources.

For $s\in(0,1)$ we also introduce the space 
$$
L^\infty_s(\R^N):=\Bigl\{u\in L^\infty(\R^N);\quad\sup_{x\in\R^N}(1+|x|^{N+2s})|u(x)|<\infty\Bigr\},
$$
equipped with the norm
$$
\left\|u\right\|_{L^\infty_{s}(\R^N)}:=\sup_{x\in\R^N}(1+|x|^{N+2s})|u(x)|.
$$
We say that $u\in L_s^1(\R^N)$ is a very weak solution of \eqref{eq:maineq} if, for every $\varphi$ compactly supported in $B$ such that $(-\Delta)^s\varphi\in L^\infty_{s}(\R^N)$, it holds
\begin{equation}
\label{eq:defveryweaksol}
\begin{cases*}
\displaystyle\int_{\R^N} u(-\Delta)^s\varphi dx=0 
\\
u=g\quad\text{in}\quad B^c.
\end{cases*}
\end{equation}
Notice that the chosen class of test functions is not empty. Indeed, let $\varphi\in C_c^{2s+\gamma}(B)$ for some $\gamma>0$. We have
\begin{align*}
\Biggl|\int_{\R^N}u(-\Delta)^s&\varphi dx\Biggr|\le
\int_{B_2}|u(-\Delta)^s\varphi| dx
+\int_{B_2^c}|u(-\Delta)^s\varphi| dx 
\\
&\le\|u\|_{L^1(B_2)}\|\varphi\|_{C^{2s+\gamma}(B)}
+\int_{B^c_2}\frac{|u(x)|}{1+|x|^{N+2s}}dx
\int_{B}\frac{1+|x|^{N+2s}}{|x-y|^{N+2s}}|\varphi(y)|dy 
\\
&\le\|u\|_{L^1(B_2)}\|\varphi\|_{C^{2s+\gamma}(B)}
+\int_{B^c_2}\frac{|u(x)|}{1+|x|^{N+2s}}dx
\int_{B}\left(1+\frac{1+|y|}{|x-y|}\right)^{N+2s}|\varphi(y)|dy 
\\
&\le \|u\|_{L^1(B_2)}\|\varphi\|_{C^{2s+\gamma}(B)}
+3^{N+2s}~\|u\|_{L^1_s(\R^N)}\|\varphi\|_{L^1(B)}
\\
&\leq C_{N,s} \|\varphi\|_{C^{2s+\gamma}(B)}\|u\|_{L^1_s(\R^N)},
\end{align*}
and then $\varphi$ is a test function.

We notice that a weak solution is a very weak solution. Indeed, using the symmetry of the double integral in $x$ and $y$
$$
\frac 12\int_{\R^N}\int_{\R^N}\frac{(u(x)-u(y))(\varphi(x)-\varphi(y))}{|x-y|^{N+2s}}dxdy=\int_{\R^N}u(x)dx\int_{\R^N}\frac{\varphi(x)-\varphi(y)}{|x-y|^{N+2s}}dy
$$
for every $\varphi\in C^{\infty}_c(B)$.

Now, we recall some useful results. From now on, for $r\ge1$ we denote with $r':=\frac{r}{r-1}$ the H\"{o}lder conjugate of $r$, and for $\tau>0$ such that $r\tau<N$ we denote with $r^*_\tau:=\frac{Nr}{N-r\tau}$ the Sobolev conjugate of $r$ with respect to $\tau$. First, we state the Sobolev embedding theorem. 

\begin{proposition}{\cite[Theorem 7.63]{adams}}
	\label{prop:sobolevembedding}
	Let $\Omega\subseteq\R^N$ an open and smooth set and let $k,h\ge 0$, $p\ge 1$. If $u\in W^{k,p}(\Omega)$, $k>h$ and $N>(k-h)p$ then the following continuity estimate holds
	$$
	\left\|u\right\|_{W^{h,q}(\Omega)}\le C\left\|u\right\|_{W^{k,p}(\Omega)}
	$$
	\begin{itemize}
		\item for every $1\le q\le \frac{Np}{N-(k-h)p}\quad\text{if}\,\,\Omega\,\,\text{has finite measure}$
		\item for every $p\le q\le \frac{Np}{N-(k-h)p}\quad\text{if}\,\,\Omega\,\,\text{has infinite measure}$.
	\end{itemize}
\end{proposition}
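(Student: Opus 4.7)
The plan is to reduce the statement to the Euclidean setting $\R^N$ and then build everything on top of the classical Gagliardo--Nirenberg--Sobolev inequality. First, since $\Omega$ is open and smooth, I would invoke the existence of a bounded linear extension operator $E\colon W^{k,p}(\Omega)\to W^{k,p}(\R^N)$ with $\|Eu\|_{W^{k,p}(\R^N)}\le C\|u\|_{W^{k,p}(\Omega)}$. This reduces every such embedding to the analogous statement on $\R^N$, where one can work with $C^\infty_c(\R^N)$ functions by density and then restrict back to $\Omega$.

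The core step is the one-derivative, $h=0$, critical exponent inequality $\|u\|_{L^{p^*}(\R^N)}\le C\|\nabla u\|_{L^p(\R^N)}$ with $p^*=Np/(N-p)$. For $p=1$ I would use the fundamental theorem of calculus along each coordinate direction to get $|u(x)|\le\int_{\R}|\partial_i u|\,dx_i$, then take the geometric mean over $i$ and integrate via the iterated (Loomis--Whitney) generalised H\"older inequality, yielding $\|u\|_{L^{N/(N-1)}(\R^N)}\le\prod_{i=1}^N\|\partial_i u\|_{L^1(\R^N)}^{1/N}$. For $1<p<N$ the substitution $v=|u|^\gamma$ with $\gamma=p(N-1)/(N-p)$ together with H\"older's inequality reduces the claim to the $p=1$ case.

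Given the one-step embedding $W^{1,p}\hookrightarrow L^{p^*}$, iteration $k-h$ times applied to successive derivatives produces $W^{k,p}(\R^N)\hookrightarrow W^{h,p^*_{k-h}}(\R^N)$ with $p^*_{k-h}=Np/(N-(k-h)p)$, and pulling back via the extension operator yields the corresponding embedding on $\Omega$ at the critical exponent $q=p^*_{k-h}$. To cover the full range of $q$: if $\Omega$ has finite measure, H\"older's inequality gives $L^{p^*_{k-h}}(\Omega)\hookrightarrow L^q(\Omega)$ for every $1\le q\le p^*_{k-h}$ with constant depending on $|\Omega|$; if $\Omega$ has infinite measure, one only obtains $p\le q\le p^*_{k-h}$ by interpolating the original $L^p$-bound on $u$ with the critical $L^{p^*_{k-h}}$-bound just established. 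The main technical step is the endpoint case $p=1$ of Gagliardo--Nirenberg--Sobolev, which requires the Loomis--Whitney argument; once that is in place, everything else reduces to iteration, extension, and H\"older interpolation.
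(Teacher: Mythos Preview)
The paper does not supply its own proof of this proposition; it is quoted as a reference result from Adams' book, so there is no argument in the paper to compare against.

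That said, your sketch has a genuine gap relative to the statement as the paper states and uses it. You implicitly treat $k$ and $h$ as nonnegative \emph{integers}: your core step is the integer first-order Gagliardo--Nirenberg--Sobolev inequality $\|u\|_{L^{p^*}}\le C\|\nabla u\|_{L^p}$, and you then propose to ``iterate $k-h$ times'' on successive classical derivatives. But the proposition is stated for real $k,h\ge 0$, and the paper applies it precisely in the fractional regime (in the proof of Theorem~\ref{th:summabilityimprovement} the embedding is used with $k=s\in(0,1)$ and $h=0$, i.e.\ $W^{s,q}\hookrightarrow L^p$). The fractional case cannot be obtained by iterating the one-derivative Sobolev inequality a whole number of times; one needs instead either the Hardy--Littlewood--Sobolev inequality for Riesz potentials, a real-interpolation argument between integer-order Sobolev spaces, or a direct estimate based on the Gagliardo seminorm. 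Likewise, the extension operator you invoke must be one adapted to fractional spaces, not just the classical Calder\'on--Stein extension for $W^{k,p}$ with $k\in\N$. As written, your outline only proves the embedding for integer $k$ and $h$, which is not the case the paper relies on.
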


We notice that if $k=h\in\mathbb{N}_0$ and if $\Omega$ has finite measure then the statement of Proposition \ref{prop:sobolevembedding} still holds true, but if $k=h\in(0,\infty)\setminus\mathbb{N}$ then the embedding may fail in general even if $\Omega$ is a ball, see \cite{MS}.

The following results give regularity properties of the weak solutions of \eqref{eq:dualproblem} under suitable assumptions on $f$. 

\begin{theorem}{\cite[Corollary 1.7]{afly}}
\label{th:afly}
Let $N\ge 2$, $\Omega\subset\R^N$ a bounded $C^2$ domain, $s\in(0,1)$ and let $u$ be the unique solution of
	\begin{equation*}
	\begin{cases}
	(-\Delta)^s w=f\quad\text{in}\quad\Omega \\
	w=0\quad\text{in}\quad\Omega^c
	\end{cases}
	\end{equation*}
	with $f\in L^m(\Omega)$.
\begin{itemize}
	\item[(i)] If $1\le m<\frac Ns$, then for all $1<p<m_s^*$ there exists $C>0$ such that
	$$
	\left\|u\right\|_{W^{s,p}(\R^N)}\le C\left\|f\right\|_{L^m(\Omega)}.
	$$
	\item[(ii)] If $m>\frac Ns$, then for all $1<p<\infty$ there exists $C>0$ such that
	$$
	\left\|u\right\|_{W^{s,p}(\R^N)}\le C\left\|f\right\|_{L^m(\Omega)}.
	$$
\end{itemize}	
\end{theorem}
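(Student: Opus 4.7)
The plan is to combine the explicit Green-function representation of $u$ with the mapping properties of the Riesz potential, following the blueprint of Calderón--Zygmund theory adapted to the fractional Dirichlet setting. Since $\Omega$ is a bounded $C^2$ domain, classical potential theory yields
$$u(x)=\int_\Omega G_\Omega(x,y) f(y)\, dy,$$
where the Dirichlet Green function $G_\Omega\ge 0$ satisfies the Chen--Song pointwise upper bound $G_\Omega(x,y)\le C|x-y|^{2s-N}$, together with the $x$-Hölder estimate, valid away from the diagonal,
$$|G_\Omega(x,y)-G_\Omega(x',y)|\le C|x-x'|^{s}\bigl(|x-y|^{s-N}+|x'-y|^{s-N}\bigr).$$

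The pointwise bound gives $|u(x)|\le C\, I_{2s}(|f|\mathbf{1}_\Omega)(x)$, where $I_{2s}$ denotes the Riesz potential of order $2s$. By the Hardy--Littlewood--Sobolev inequality one then obtains $\|u\|_{L^{m_{2s}^*}(\R^N)}\le C\|f\|_{L^m(\Omega)}$ for $1<m<N/(2s)$, and $u\in L^\infty(\R^N)$ when $m>N/s$; combined with the boundedness of $\Omega$ and the vanishing of $u$ outside $\Omega$, this covers the $L^p$ component of the $W^{s,p}$ norm in both cases (i) and (ii) of the statement.

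For the fractional seminorm $[u]_{W^{s,p}(\R^N)}$ I would proceed by a near/far splitting in the variable $h=x-x'$. In the near regime, plugging the $x$-Hölder estimate of $G_\Omega$ into the representation gives
$$|u(x)-u(x')|\le C|x-x'|^s\bigl(I_s(|f|\mathbf{1}_\Omega)(x)+I_s(|f|\mathbf{1}_\Omega)(x')\bigr),$$
so that the singularity $|x-x'|^{-N-sp}$ in the definition of $[u]_{W^{s,p}}^p$ is partially cancelled, and the remaining double integral is controlled by $\|I_s(|f|\mathbf{1}_\Omega)\|_{L^p}^p$; the HLS bound $I_s\colon L^m\to L^{m_s^*}$ then gives the estimate for every $1<p<m_s^*$. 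In the far regime I would simply bound $|u(x)-u(x')|\le |u(x)|+|u(x')|$ and exploit the decay of $|x-x'|^{-N-sp}$ together with the $L^p$ bound obtained above.

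The main obstacle is making the near/far decomposition uniform up to $\partial\Omega$, where the Green function $G_\Omega$ carries a boundary weight of the form $(\mathrm{dist}(x,\partial\Omega)\mathrm{dist}(y,\partial\Omega))^{s}$ that degrades the $x$-Hölder constant near the boundary. Using the $C^{2}$ regularity of $\partial\Omega$ and the standard barrier estimates for the fractional Laplacian, the cut-off scale separating the two regimes can be chosen as a suitable multiple of $\mathrm{dist}(x,\partial\Omega)\wedge 1$; this balances the boundary-weighted loss against the cancellation of the kernel and yields the claimed linear dependence on $\|f\|_{L^m(\Omega)}$. In case (ii), when $m>N/s$, all the relevant Riesz potentials satisfy Morrey-type embeddings, so the estimate holds for every finite $p$ with no further restriction.
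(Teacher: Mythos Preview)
The paper does not prove this statement: Theorem~\ref{th:afly} is quoted verbatim from \cite[Corollary~1.7]{afly} and used as a black box in the summability-improvement argument. So there is no ``paper's own proof'' to compare against; what you are really sketching is a route to the result of Abdellaoui--Fern\'andez--Leonori--Younes.

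Your Green-function strategy is the natural one, and the $L^p$ part via $|u|\le C\,I_{2s}(|f|\mathbf{1}_\Omega)$ together with Hardy--Littlewood--Sobolev is fine. The gap is in the seminorm estimate. From the $s$-H\"older bound you write,
\[
|u(x)-u(x')|\le C\,|x-x'|^{s}\bigl(I_s(|f|\mathbf{1}_\Omega)(x)+I_s(|f|\mathbf{1}_\Omega)(x')\bigr),
\]
the $|x-x'|^{sp}$ factor cancels the kernel $|x-x'|^{-N-sp}$ only down to $|x-x'|^{-N}$, which is \emph{not} integrable near the diagonal. Concretely, for any cut-off radius $r>0$,
\[
\int_{|x-x'|<r}\frac{\bigl(I_s(|f|\mathbf{1}_\Omega)(x)\bigr)^p}{|x-x'|^{N}}\,dx'
=\bigl(I_s(|f|\mathbf{1}_\Omega)(x)\bigr)^p\int_{|h|<r}|h|^{-N}\,dh=+\infty,
\]
so the sentence ``the remaining double integral is controlled by $\|I_s(|f|\mathbf{1}_\Omega)\|_{L^p}^p$'' is false as stated. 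The exponent $s$ in your H\"older estimate for $G_\Omega$ is exactly the borderline one; any $\theta>s$ would make the argument work, but the uniform $\theta$-H\"older estimate $|G_\Omega(x,y)-G_\Omega(x',y)|\le C|x-x'|^{\theta}(\cdots)$ with $\theta>s$ fails near $\partial\Omega$ because $|\nabla_x G_\Omega(x,y)|$ carries a factor $d(x)^{s-1}$ there.

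A repair requires more than choosing the near/far scale as a multiple of $d(x)$. One workable route is to split the $y$-integral (not the $h$-variable) at scale $|x-y|\sim|x-x'|$, use the gradient bound $|\nabla_x G_\Omega(x,y)|\le C\,G_\Omega(x,y)\max\bigl(|x-y|^{-1},d(x)^{-1}\bigr)$ in the far-from-$y$ piece, and then control the resulting boundary-weighted terms with the fractional Hardy inequality; this is essentially what \cite{afly} does. Alternatively, one can avoid the pointwise route entirely and obtain the $W^{s,p}$ bound by real interpolation between the endpoint weak-type estimates for the Green operator.
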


\begin{theorem}{\cite[Proposition 1.4]{rosotonserra13}}
	\label{th:rosotonserra}
	Let $s\in(0,1)$, $N>2s$, $\Omega\subset\R^N$ a bounded $C^{1,1}$ domain, $f\in C(\overline{\Omega})$ and let $u$ be the weak solution of
	\begin{equation*}
	\begin{cases}
	(-\Delta)^s w=f\quad\text{in}\quad\Omega \\
	w=0\quad\text{in}\quad\Omega^c.
	\end{cases}
	\end{equation*}
	\begin{itemize}
		\item[(i)] For each $1\le r<\left(\frac{N}{2s}\right)'$ there exists $C=C(n,r,s,|\Omega|)>0$ such that
		$$
		\left\|u\right\|_{L^r(\Omega)}\le C\left\|f\right\|_{L^1(\Omega)}.
		$$
		\item[(ii)] Let $1<p<\frac{N}{2s}$ and $p_{2s}^*=\tfrac{Np}{N-2sp}$. Then there exists $C=C(n,s,p)>0$ such that for any $1\le q\le p^*_{2s}$
		$$
		\left\|u\right\|_{L^q(\Omega)}\le C\left\|f\right\|_{L^p(\Omega)}.
		$$
		\item[(iii)] Let $\frac{N}{2s}<p<\infty$. Then there exists $C=C(n,s,p,\Omega)>0$ such that
		$$
		\left\|u\right\|_{C^\beta(\R^N)}\le C\left\|f\right\|_{L^p(\Omega)},
		$$
		where $\beta:=\min\left\{s,2s-\frac Np \right\}$.
	\end{itemize}
\end{theorem}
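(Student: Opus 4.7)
The strategy is to reduce the three estimates to the well-known mapping properties of the Riesz potential $I_{2s}$, combined with the sharp boundary behavior of Dirichlet fractional solutions. On a bounded $C^{1,1}$ domain one has the Green function representation
\begin{equation*}
u(x)=\int_\Omega G_\Omega(x,y)f(y)\,dy,
\end{equation*}
together with the standard pointwise bound $G_\Omega(x,y)\le C|x-y|^{2s-N}$. Consequently,
\begin{equation*}
|u(x)|\le C\int_\Omega\frac{|f(y)|}{|x-y|^{N-2s}}\,dy = C\,(I_{2s}|f|)(x),
\end{equation*}
which is the workhorse of the whole argument.

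For (i), the operator $I_{2s}$ maps $L^1(\R^N)$ into the weak Lebesgue space $L^{N/(N-2s),\infty}(\R^N)$. Since $\Omega$ is bounded, this weak-type space embeds continuously into $L^r(\Omega)$ for every $r<N/(N-2s)=(N/(2s))'$, which yields the first estimate. For (ii), the Hardy--Littlewood--Sobolev inequality gives $I_{2s}\colon L^p(\R^N)\to L^{p_{2s}^*}(\R^N)$ continuously whenever $1<p<N/(2s)$, and a further application of H\"older's inequality on the bounded set $\Omega$ lowers the target exponent to an arbitrary $1\le q\le p_{2s}^*$.

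The proof of (iii) splits into an interior and a boundary contribution. Interior: when $p>N/(2s)$, a classical Morrey-type estimate shows that $I_{2s}g\in C^{2s-N/p}_{\mathrm{loc}}(\R^N)$ whenever $g\in L^p(\Omega)$, hence $u\in C^{2s-N/p}_{\mathrm{loc}}(\Omega)$ with the expected norm bound. Boundary: the key ingredient is the sharp result that the quotient $u(x)/\mathrm{dist}(x,\partial\Omega)^s$ is bounded in $\Omega$; combined with $u\equiv 0$ in $\Omega^c$, this forces $u\in C^s(\R^N)$ with $\|u\|_{C^s(\R^N)}\le C\|f\|_{L^p(\Omega)}$. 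Putting together the two contributions and taking the worse exponent yields the global H\"older estimate with $\beta=\min\{s,2s-N/p\}$.

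The main obstacle is the boundary portion of (iii): the interior Morrey estimate is classical Calder\'on--Zygmund theory, quickly derived from the bound on $G_\Omega$, but the global $C^s$ bound---and the fact that the H\"older exponent cannot be improved beyond $s$ up to $\partial\Omega$ even for smooth $f$---is genuinely delicate, as it reflects a purely nonlocal phenomenon. It relies on a barrier argument built around the explicit solution $(1-|x|^2)_+^s$ on the unit ball, transported to a general $C^{1,1}$ domain via local charts and controlled through the Green function upper bound, matched with the interior H\"older estimate in a neighborhood of the boundary.
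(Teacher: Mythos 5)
This theorem is cited by the paper to \cite[Proposition 1.4]{rosotonserra13}; the paper does not reproduce its proof, so there is no internal argument to compare against. Your sketch follows the broad lines of the original source (Green function representation, Riesz-potential mapping properties for (i)--(ii), boundary regularity for (iii)), and items (i) and (ii) are correct as written: the pointwise bound $G_\Omega(x,y)\le C|x-y|^{2s-N}$ is a pure size estimate, and size is all that the weak $(1,1)$ and Hardy--Littlewood--Sobolev bounds for $I_{2s}$ require, with H\"older's inequality on the bounded set $\Omega$ lowering the exponent.

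For item (iii), however, there is a genuine gap. A pointwise upper bound on $G_\Omega$ cannot, on its own, yield H\"older control of $x\mapsto\int_\Omega G_\Omega(x,y)f(y)\,dy$: to estimate $|u(x)-u(x')|$ you need difference (or gradient) estimates on the kernel, and $G_\Omega$ is not a convolution kernel. Your phrase ``a classical Morrey-type estimate shows that $I_{2s}g\in C^{2s-N/p}_{\mathrm{loc}}$ \ldots\ quickly derived from the bound on $G_\Omega$'' therefore does not go through as written. The standard way to fix this (and the route taken in the cited source) is to decompose $u = c_N I_{2s}(f\chi_\Omega) - v$, where the first term is a genuine Riesz potential, hence globally in $C^{0,2s-N/p}(\R^N)$ for $p>N/(2s)$ by the classical convolution estimate, and $v$ is the $s$-harmonic corrector with exterior datum $c_N I_{2s}(f\chi_\Omega)$, whose regularity is then established via the boundary barrier. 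In addition, the claim that boundedness of $u/\mathrm{dist}(\cdot,\partial\Omega)^s$ ``forces $u\in C^s(\R^N)$'' conflates a size bound near $\partial\Omega$ with a seminorm bound; one still needs the interior H\"older estimate together with a case split on $|x-x'|$ versus $\min\{\delta(x),\delta(x')\}$, and, since \cite[Proposition 1.1]{rosotonserra13} is stated for $f\in L^\infty$, extending the $u/\delta^s$ bound to $f\in L^p$ with $p>N/s$ requires invoking the sharp two-sided Green function estimates on $C^{1,1}$ domains rather than citing that proposition directly. The plan is the right one, but as written the key step of (iii) is not proved.
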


\section{Improvement of summability}

Now we are ready to state and prove the following

\begin{theorem}
	\label{th:summabilityimprovement}
	Let $u\in L^1_s(\R^N)$ a very weak solution of \eqref{eq:maineq}. Then $u\in L^2_{{\rm loc}}(B)$.
\end{theorem}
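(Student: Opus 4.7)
The strategy is a localization plus bootstrap argument, based on the Dirichlet regularity estimates of Theorem~\ref{th:rosotonserra}. Fix $0<R<R'<1$ and let $\eta\in C_c^\infty(B_{R'})$ with $\eta\equiv 1$ on $B_R$; set $v:=\eta u$, so that $v$ coincides with $u$ on $B_R$ and vanishes outside $B_{R'}$. The goal is to identify $v$ as a very weak solution of a Dirichlet problem for $(-\Delta)^s$ on $B_{R'}$ with homogeneous exterior datum and an explicit right-hand side $J$, and then invoke the cited regularity theory to improve the summability of $u$.

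To derive the equation for $v$, I would test $\int v(-\Delta)^s\varphi\,dx$ against $\varphi\in C_c^\infty(B_{R'})$ and use the fractional Leibniz identity
\[
\eta(-\Delta)^s\varphi=(-\Delta)^s(\eta\varphi)-\varphi(-\Delta)^s\eta+I_s(\eta,\varphi),
\]
where $I_s(\eta,\varphi)(x):=C_{N,s}\int(\eta(x)-\eta(y))(\varphi(x)-\varphi(y))|x-y|^{-N-2s}dy$. Since $\eta\varphi\in C_c^\infty(B)$ is an admissible test function, the very weak formulation of $u$ annihilates the first term. A Fubini exchange in the bilinear remainder---made absolutely convergent by the cancellation $\eta(x)-\eta(y)=O(|x-y|)$ compensating the kernel singularity $|x-y|^{-N-2s}$---then rearranges the pairing into
\[
\int v(-\Delta)^s\varphi\,dx=\int \varphi(y)\,J(y)\,dy,\qquad J(y):=C_{N,s}\int u(x)\,\frac{\eta(x)-\eta(y)}{|x-y|^{N+2s}}\,dx.
\]
On $B_R$, where $\eta\equiv 1$, the expression for $J$ simplifies to $-C_{N,s}\int_{B_R^c} u(x)(1-\eta(x))|x-y|^{-N-2s}dx$, and only the values of $u$ outside $B_R$ enter, weighted by $1-\eta$.

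Combining the smoothness and compact support of $\eta$ with $u\in L^1_s(\R^N)$ yields the base bound $\|J\|_{L^1(B_{R'})}\leq C(\eta)\|u\|_{L^1_s(\R^N)}$. Interpreting $v$ as a very weak Dirichlet solution of $(-\Delta)^sv=J$ on $B_{R'}$ with zero exterior datum, which for $L^1$ right-hand sides coincides with the weak one by the framework of~\cite{abdperal}, Theorem~\ref{th:rosotonserra}(i) gives $v\in L^r(B_{R'})$ for every $r<(N/2s)'=N/(N-2s)$, and hence $u\in L^r_{\rm loc}(B)$. I would then iterate on slightly smaller nested balls: with $u\in L^r_{\rm loc}$ the new $J$ inherits an $L^r$ bound, and Theorem~\ref{th:rosotonserra}(ii) upgrades $v$ to $L^{r^*_{2s}}$ with $r^*_{2s}=Nr/(N-2sr)$. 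The exponents $r_k=N/(N-2sk)$ grow strictly and reach $2$ after at most $\lceil N/(4s)\rceil$ steps (or earlier if $r_k$ crosses the threshold $N/(2s)$ and one invokes case (iii) of Theorem~\ref{th:rosotonserra}), yielding $u\in L^2_{\rm loc}(B)$.

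The main obstacle is the rigorous identification of $v=\eta u$ as a very weak solution of the Dirichlet problem to which Theorem~\ref{th:rosotonserra}, stated for weak solutions, still applies; this relies on the equivalence of the two notions for $L^1$ data established in~\cite{abdperal}. A further delicate point is the Fubini exchange producing $J$: the cancellation $\eta(x)-\eta(y)$ near the diagonal and the decay of $\eta$ at infinity are precisely what render the double integral absolutely convergent against $|u|$ with the $L^1_s$ weight $(1+|x|)^{-N-2s}$. The choice of nested radii so that the bootstrap closes in finitely many iterations is routine but requires careful book-keeping of the loss of ball radius at each step.
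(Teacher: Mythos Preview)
Your localization-plus-bootstrap outline is natural, but there is a genuine gap in the range $s\in[1/2,1)$. You claim that the Fubini exchange producing $J$ is ``made absolutely convergent by the cancellation $\eta(x)-\eta(y)=O(|x-y|)$''. That cancellation reduces the diagonal singularity only to $|x-y|^{1-N-2s}$, and
\[
\int_{|z|<1}|z|^{1-N-2s}\,dz=\omega_{N-1}\int_0^1 r^{-2s}\,dr
\]
diverges as soon as $s\ge 1/2$. Concretely, for $x$ in the transition annulus $B_{R'}\setminus B_R$ (where $\nabla\eta\neq 0$) and $y$ nearby, the integrand in your definition of $J(y)$ has size $\sim |u(x)|\,|x-y|^{1-N-2s}$; the inner integral is then infinite for generic $u\in L^1_{\rm loc}$, so $J$ is not even well defined pointwise, let alone in $L^1(B_{R'})$. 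The same obstruction blocks the bootstrap step ``with $u\in L^r_{\rm loc}$ the new $J$ inherits an $L^r$ bound'': the convolution kernel $\min(1,|z|)|z|^{-N-2s}$ you would need is not in $L^1$ for $s\ge 1/2$, so Young's inequality does not apply. What actually makes $\int u\,I_s(\eta,\varphi)\,dx$ absolutely convergent is the \emph{product} cancellation $(\eta(x)-\eta(y))(\varphi(x)-\varphi(y))=O(|x-y|^2)$, but once you peel off $\varphi(y)$ to isolate $J$, you lose one of those two factors.

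The paper circumvents this entirely by a duality argument: given $\psi\in L^p$, solve $(-\Delta)^s v=\psi$ in a slightly smaller ball with zero exterior data, and test the very weak formulation of $u$ against $\eta^2 v$. The point is that $u$ is then paired only with quantities like $I_s(\eta,\eta v)$ and $\eta I_s(\eta,v)$, which are bounded thanks to the regularity of $v$ (from Theorem~\ref{th:rosotonserra} and Theorem~\ref{th:afly}); $u$ itself is never fed into a singular kernel. The price is that each duality step gains only $s$ in summability (via the $W^{s,p}$ regularity of $v$), so one needs roughly $N/s$ iterations rather than the $N/(2s)$ your scheme would achieve. For $s<1/2$ your direct approach can be made to work and is arguably cleaner, but as written it does not cover the full range $s\in(0,1)$.
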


\begin{proof}
To ease the presentation, we divide the proof in three steps.
\\
{\it Step one: first summability improvement}. 
In this first step we prove 
\begin{equation} \label{eq:claim1}
u \in L^r_{\text{loc}}(B)\quad\text{ for all }\quad r<\frac{N}{N-s}. 
\end{equation}
	Let $p>\frac{N}{s}$ and $\psi\in C^\infty(B) \cap  C(\overline{B})$. Now, let $v$ be the unique solution of the Dirichlet problem
	\begin{equation}
	\label{eq:testproblem}
	\begin{cases}
	(-\Delta)^s w=\psi\quad\text{in}\quad B_{1-\delta} \\
	w=0\quad\text{in}\quad B_{1-\delta}^c.
	\end{cases}
	\end{equation}
for some $\delta>0$ sufficiently small to be conveniently chosen in the sequel. Now let $\eta\in C^\infty_c(B)$ be such that $\eta=1$ in $B_{1-4\delta}$, $\eta=0$ in $B^c_{1-2\delta}$ and $|\nabla\eta|\le \frac 1\delta$. Notice that by Theorem \ref{th:rosotonserra} we have that $v \in C^s(\R^N)$, and this easily implies that $(-\Delta)^s(\eta^2 v)\in L_s^\infty(\R^N)$ and ${\rm supp}\, \eta^2 v \subset B_{1-2\delta}$.
Indeed, for $|x|>2$ it holds
$$
|(-\Delta)^s(\eta^2 v)(x)|\le\left|\int_{B}\frac{\eta^2(y)v(y)}{|x-y|^{N+2s}}dy\right|\le \frac{3^{N+2s}\left\|v\right\|_{L^1(B)}}{1+|x|^{N+2s}},
$$
while the boundedness of $(-\Delta)^s(\eta^2 v)$ in $B_2$ is an immediate consequence of Theorem \ref{th:rosotonserra}.

Therefore, we can use $\varphi=\eta^2v$ as a test function in the definition of very weak solution \eqref{eq:defveryweaksol}.	
	Then
	\begin{equation}
	0=\int_{\R^N}\eta^2 u\psi dx+2\int_{\R^N}uv\eta(-\Delta)^s\eta dx-\int_{\R^N}u\eta I_s(\eta,v)dx-\int_{\R^N}uI_s(\eta,\eta v)dx
	\end{equation}
	where for any $f_1,f_2$ measurable we have set
\begin{equation}\label{defIs}
	I_s(f_1,f_2)(x):=C_{N,s}\int_{\R^N}\frac{(f_1(x)-f_1(y))(f_2(x)-f_2(y))}{|x-y|^{N+2s}}dy,
\end{equation}
and the constant $C_{N,s}$ is that one in the definition of $(-\Delta)^s$.

	Then
	\begin{equation}
	\label{eq:firststepsummability}
	\begin{split}
	\left|\int_{\R^N}\eta^2 u\psi dx\right|&\le\int_{\R^N}|uI_s(\eta,\eta v)|dx+2\int_{\R^N}|uv\eta(-\Delta)^s\eta|dx+\int_{\R^N}|u\eta I_s(\eta,v)|dx
	\\
	&=:A_1+A_2+A_3.
	\end{split}
	\end{equation}
	We start by estimating the term $A_1$. Then
	\begin{equation}
	\begin{split}
	\int_{\R^N}|uI_s(\eta,\eta v)|dx&\le \left\|u\right\|_{L_s^1(\R^N)}\left\|I_s(\eta,\eta v)\right\|_{L^\infty_{s}(\R^N)}
	\le\, C(\delta)\left\|u\right\|_{L_s^1(\R^N)}\left\|\eta v\right\|_{C^{s}(\R^N)} \\
	&\le C(\delta)\left\|u\right\|_{L_s^1(\R^N)}\left\|v\right\|_{C^s(\R^N)}\le C(\delta)\left\|u\right\|_{L_s^1(\R^N)}\left\|\psi\right\|_{L^p(B)},
	\end{split}
	\end{equation}
	where the last inequality exploits item (iii) in Theorem \ref{th:rosotonserra}, which holds true since $p>\frac{N}{s}$.
	For the second inequality we notice that for any $x\in\R^N$ and $y\in B_{1-2\delta}$ we have 
	\begin{equation}
	\label{eq:estdistboundary}
	\begin{split}
	(1+|x|^{N+2s})|\eta(x)-\eta(y)|\le&\, C(\delta) \chi_{B_{1-\delta}}(x) \left\|\nabla\eta\right\|_{L^\infty(\R^N)}|x-y| 
	\\
	&+2^{N+2s-1}\chi_{B_{1-\delta}^c} (x)(1+|x-y|^{N+2s}+|y|^{N+2s}) \\
	\le&\, C(\delta) \chi_{B_{1-\delta}}(x) \left\|\nabla\eta\right\|_{L^\infty(\R^N)}|x-y| \\
	&+C(N,s,\delta)\chi_{B_{1-\delta}^c} (x)|x-y|^{N+2s}
	\end{split} 
	\end{equation}
	while for any $x\in\R^N$ and $y\in B^c_{1-2\delta}$
	$$
	(1+|x|^{N+2s})|\eta(x)-\eta(y)|\le C(\delta)\chi_{B_{1-\delta}}(x).
	$$
Therefore, for any $x\in\R^N$
	\begin{align}
	\label{eq:trepuntootto}
	(1+|x|^{N+2s})&\left|I_s(\eta,\eta v)(x)\right|\le (1+|x|^{N+2s})\int_{B_{1-2\delta}}\frac{|\eta(x)-\eta(y)||\eta(x)v(x)-\eta(y)v(y)|}{|x-y|^{N+2s}}dy 
	\\ \nonumber
	&+
	(1+|x|^{N+2s})\int_{B^c_{1-2\delta}}\frac{|\eta(x)-\eta(y)||\eta(x)v(x)-\eta(y)v(y)|}{|x-y|^{N+2s}}dy 
	\\ \nonumber
	\le&\,(1+|x|^{N+2s})\int_{B_{1-2\delta}}\frac{|\eta(x)-\eta(y)||\eta(x)v(x)-\eta(y)v(y)|}{|x-y|^{N+2s}}dy 
	\\ \nonumber
	&+
	(1+|x|^{N+2s})|v(x)|\int_{B^c_{1-2\delta}}\frac{(\eta(x)-\eta(y))^2}{|x-y|^{N+2s}}dy 
	\\ \nonumber
    \le&\,\chi_{B_{1-\delta}}(x)\left\|\eta v\right\|_{C^s(\R^N)}\int_{B_{1-2\delta}}\frac{dy}{|x-y|^{N+s-1}} 
    \\ \nonumber
    &+\chi_{B_{1-2\delta}}(x)C'(\delta)|v(x)|\left(\int_{B^c_{\delta}(x)}\frac{dy}{|x-y|^{N+2s}}+\int_{B_{2-3\delta}(x)}\frac{dy}{|x-y|^{N+2s-2}}\right) 
    \\ \nonumber
    &+C(N,s,\delta)\left\|\eta v\right\|_{L^\infty(\R^N)}\chi_{B^c_{1-\delta}}(x)
    \\ \nonumber
    \leq& C(N,s,\delta)\left\|v\right\|_{C^{s}(\R^N)}\le C(N,s,\delta)\left\|\psi\right\|_{L^p(B)},
	\end{align}
	where in the third inequality we have used \eqref{eq:estdistboundary}, the equality  $$|\eta(x)-\eta(y)||\eta(x)v(x)-\eta(y)v(y)|=|v(x)||\eta(x)-\eta(y)|^2$$ that holds for any $x\in B_{1-\delta}$ and $y\in B^c_{1-2\delta}$ and also that the integral term in the fourth line of \eqref{eq:trepuntootto} is nonzero if and only if $x\in B_{1-2\delta}$ and it can be split in the sum of two integrals over $B^c_{1-\delta}$ and $B_{1-\delta}\setminus B_{1-2\delta}$.
	
	For $A_2$ we have 
	\begin{equation}
	\begin{split}
	\int_{\R^N}|uv\eta(-\Delta)^s\eta|dx&\le C\left\|u\right\|_{L^1(B_{1-2\delta})}\left\|v\right\|_{L^\infty(B_{1-2\delta})}\le C\left\|u\right\|_{L^1(B_{1-2\delta})}\left\|\psi\right\|_{L^p(B)},
\end{split}
	\end{equation}
where we used again Theorem \ref{th:rosotonserra}.

To estimate $A_3$ we notice that
$$
\int_{\R^N}|u\eta I_s(\eta,v)|dx=\int_{B_{1-2\delta}}|u\eta I_s(\eta,v)|dx\le\left\|u\right\|_{L^1(B_{1-2\delta})}\left\|I_s(\eta,v)\right\|_{L^\infty(B_{1-2\delta})}
$$
and for almost any $x\in B_{1-2\delta}$, if we split $A_3$ into the sum of the integrals over $B_{1-\delta}$ and $B_{1-\delta}^c$ we have 
	$$
	|I_s(\eta,v)(x)|\le C\left\|v\right\|_{C^s(\R^N)}\left(\int_{B_{2-3\delta}(x)}\frac{dy}{|x-y|^{N+s-1}}+\int_{B^c_\delta(x)}\frac{dy}{|x-y|^{N+2s}}\right)\le C\left\|\psi\right\|_{L^p(B)},
	$$
	where we have used again Theorem \ref{th:rosotonserra}(iii), that holds true for $p>\frac{N}{s}$ with $\beta=s$.
	Now we have 
	\begin{equation}\label{eq:stepone}
	\left|\int_{\R^N}u\eta^2\psi dx\right|\le C\left\|u\right\|_{L_s^1(\R^N)}\left(\left\|v\right\|_{W^{s,p}(\R^N)}+\left\|v\right\|_{C^s(\R^N)}\right)\le C\left\|u\right\|_{L_s^1(\R^N)}\left\|\psi\right\|_{L^p(B)}
	\end{equation}
for all $\psi \in C^\infty (B)\cap C(\overline{B})$, with $p>\frac Ns$. By the density of $C^\infty (B)\cap C(\overline{B})$ in $L^p(B)$ we have that \eqref{eq:stepone} holds for all $\psi\in L^p(B)$, which implies that $\eta^2 u \in L^{p'}(B)$, hence $\eta^2 u \in L^r(B)$ for all $r<\frac{N}{N-s}$. The arbitrariness of $\delta$ gives the claim.
	\\
	{\it Step two: higher summability.} Our next goal is to show
\begin{equation}
u \in L^r_{\text{loc}}(B)\quad\text{for}\quad r \in\left(\frac{N}{N-s},\frac{N}{N-2s}\right)
\end{equation}
In order to improve the summability of the solution $u$ we still use a duality argument, but in a bit different way.
Let $\psi \in C^\infty(B)\cap C(\overline{B})$, take $m \in\left(\frac{N}{(1+\alpha)s},\frac{N}{s}\right)$ for some $\alpha\in(0,1)$ and let $v,\eta$ as before, where now $\delta$ is the double of the previous one. Since we know that $u \in L_{\text{loc}}^{r}(B)$ for all $r<\frac{N}{N-s}$,
let $p' \in (\frac{N}{N-\alpha s},\frac{N}{N-s}) $.
It is very easy to check that the function $v\eta^2$ is  admissible as test function in definition \eqref{eq:defveryweaksol}. We estimate again the three terms appearing in \eqref{eq:firststepsummability}, but this time we can use the higher summability of $u$ proved in Step one to estimate 
	\begin{equation}
	\label{eq:stimaII2it}
	\begin{split}
	A_2&\le C\left\|u\right\|_{L^{p'}(B_{1-2\delta})}\left\|v\right\|_{L^{p}(B_{1-2\delta})}\le C\left\|u\right\|_{L^{p'}(B_{1-2\delta})}\left\|v\right\|_{W^{s,q}(B_{1-2\delta})}
	\\
	&\le C\left\|u\right\|_{L^{p'}(B_{1-2\delta})}\left\|\psi\right\|_{L^{m}(B)} ,
\end{split}	
	\end{equation}
	where the second inequality exploits Sobolev embedding Theorem for $q\ge\frac{Np}{N+sp}\ge\frac{N}{2s}$ and the third inequality exploits Theorem \ref{th:afly} and it holds for all $q\in(1,m^*_s)$ (recall that $m^*_s=\frac{Nm}{N-ms}$) whenever $1\le m<\frac Ns$.

Now we estimate again the term $A_1$ in \eqref{eq:firststepsummability}
\begin{equation} \label{eq:step2A1}
\begin{split}
A_1\le&\int_{B_{1-\delta}}|u(x)|dx\int_{B^c_{1-2\delta}}\frac{|\eta(x)-\eta(y)||\eta(x)v(x)-\eta(y)v(y)|}{|x-y|^{N+2s}}dy \\
&+\int_{B_{1-\delta}}|u(x)|dx\int_{B_{1-2\delta}}\frac{|\eta(x)-\eta(y)||\eta(x)v(x)-\eta(y)v(y)|}{|x-y|^{N+2s}}dy \\
&+\int_{B^c_{1-\delta}}|u(x)|dx\int_{\R^N}\frac{|\eta(x)-\eta(y)||\eta(x)v(x)-\eta(y)v(y)|}{|x-y|^{N+2s}}dy \\
=:&\,B_1+B_2+B_3.
\end{split}
\end{equation}
To bound $B_1$ we first observe that since $\eta(y)=0$ for $y\in B^c_{1-2\delta}$ we have 
\begin{equation}\label{eq:stimaalfa}
\begin{split}
B_1=&\,\int_{B_{1-2\delta}}|u(x)||v(x)|\eta^2(x)dx\int_{B^c_{1-2\delta}}\frac{dy}{|x-y|^{N+2s}},
\\
=&\,\int_{B_{1-2\delta}}|u(x)||v(x)|dx\int_{B^c_{1-2\delta}}\frac{(\eta(x)-\eta(y))^2}{|x-y|^{N+2s}}\:dy
\\
\le&\,\int_{B_{1-2\delta}}|u(x)||v(x)|dx\int_{B^c_{1-\delta}}\frac{dy}{|x-y|^{N+2s}}\\\
&+\int_{B_{1-2\delta}}|u(x)||v(x)|dx\int_{B_{1-\delta}\setminus B_{1-2\delta}}\frac{(\eta(x)-\eta(y))^2}{|x-y|^{N+2s}}\:dy\\
\le&\, C(\delta)\left(\left\|u\right\|_{L^{p'}(B_{1-2\delta})}\left\|v\right\|_{L^{p}(B_{1-2\delta})}+\int_{B_{1-2\delta}}|u(x)||v(x)|dx\int_{B_{1-\delta}}\frac{1}{|x-y|^{N+2s-2}}\:dy\right)
\\
\le&\, C(\delta)\left\|u\right\|_{L^{p'}(B_{1-2\delta})}\left\|\psi\right\|_{L^{m}(B)}
\end{split}
\end{equation}
Analogously for $B_3$ we use that $\eta(x)=0$ for $x \in B^{c}_{1-\delta}$ and that $(1+|x|^{N+2s})\le C_\delta|x-y|^{N+2s}$ for any $x\in B^c_{1-\delta}$ and $y\in B_{1-2\delta}$ to find
\begin{equation}
\label{eq:stimagamma}
\begin{split}
B_3&\le\int_{B^c_{1-\delta}}\frac{|u(x)|}{1+|x|^{N+2s}}dx\int_{B_{1-2\delta}}(1+|x|^{N+2s})\frac{|v(y)|\eta^2(y)}{|x-y|^{N+2s}}dy \\
&\le C_\delta\left\|u\right\|_{L_s^1(\R^N)}\left\|v\right\|_{L^{1}(B_{1-2\delta})} 
\le  C_\delta\left\|u\right\|_{L_s^1(\R^N)}\left\|\psi\right\|_{L^{m}(B)} 
\end{split}
\end{equation}
and again the Sobolev embedding Theorem holds true for any $q\ge\frac{N}{2s}$. To bound $B_2$ we use the H\"{o}lder inequality in the following way:
\begin{align} \label{eq:step2B2}
B_2&\leq \int_{B_{1-\delta}}|u(x)|dx
\int_{B_{1-2\delta}}\frac{|\eta(x)-\eta(y)|
|\eta(x)v(x)-\eta(y)v(y)|}{|x-y|^{N+2s}}dy 
\\ \nonumber
&\leq \|u\|_{L^{p'}(B_{1-\delta})} 
\biggl(\int_{B_{1-\delta}}\biggl( \int_{B_{1-\delta}} 
\frac{|\eta(x)v(x)-\eta(y)v(y)|}{|x-y|^{N+2s-1}} dy   \biggr)^{p}dx \biggr)^{1/p}
\\ \nonumber
&\leq  \|u\|_{L^{p'}(B_{1-\delta})}
\biggl(\int_{B_{1-\delta}}\int_{B_{1-\delta}} 
\frac{|\eta(x)v(x)-\eta(y)v(y)|^p}{|x-y|^{N+sp}} dy   
\cdot \biggl(\int_{B_{1-\delta}} \frac{dy}{|x-y|^{N-(1-s)p'}}\biggr)^{p-1} dx \biggr)^{1/p}
\\ \nonumber
&\leq C  \|u\|_{L^{p'}(B_{1-\delta})}\|\eta v\|_{W^{s,p}(\R^N)} \leq C \|u\|_{L^{p'}(B_{1-\delta})}\|\psi \|_{L^m(B)},
\end{align}
provided that $p<\frac{Nm}{N-sm}$. Since $p' >\frac{N}{N-\alpha s}$ we get $p <\frac{N}{\alpha s}$, hence 
$\|v \|_{W^{s,p}(B)}\leq \|\psi \|_{L^m(B)}$ if $m > \frac{N}{\left(1+\alpha\right)s}$.
Inequalities \eqref{eq:stimaalfa},\eqref{eq:stimagamma},\eqref{eq:step2B2} give
\begin{equation} \label{eq:step2A2}
|A_2|\leq C(\delta) \|\psi \|_{L^m(B)}\|u\|_{L^{p'}(B_{1-2\delta})}.
\end{equation}
To estimate $A_3$ we proceed as in the previous estimate:
\begin{align*}
A_3=\int_{B_{1-2\delta}}|u\eta I_s(\eta,v)|dx&\le\int_{B_{1-2\delta}}| \eta(x) u(x)|\int_{\R^N}\frac{|\eta(x)-\eta(y)||v(x)-v(y)|}{|x-y|^{N+2s}}dy\, dx .
\end{align*}
We split the integral on $\R^N$ in $B_{1-\delta}$ and $B_{1-\delta}^c$ and we use again the H\"{o}lder inequality and Theorem \ref{th:afly} to infer
\begin{equation} \label{eq:step2A3a}
\begin{split}
\int_{B_{1-2\delta}} &|\eta(x)u(x)|\int_{B_{1-\delta}}\frac{|\eta(x)-\eta(y)||v(x)-v(y)|}{|x-y|^{N+2s}}dy\, dx
\\
\leq& 
\left( \int_B |\eta u|^{p'}dx\right)^{1/p'}
\left(\int_{B_{1-\delta}}\left( \int_{B_{1-\delta}} \frac{|v(x)-v(y)|}{|x-y|^{N+2s-1}} \,dy \right)^{p}\,dx\right)^{1/p}
\\
\leq &
 C \|\eta u\|_{L^{p'}(B)}\biggl(\int_{B_{1-\delta}}dx \int_{B_{1-\delta}} \frac{|v(x)-v(y)|^p}{|x-y|^{N+sp}}\,dy\biggr)^{1/p}
\\
\leq & 
 C \|\eta u\|_{L^{p'}(B)} \|v\|_{W^{s,p}(\R^N)} \leq C\|\eta u\|_{L^{p'}(B)} \|\psi\|_{L^m(B)}.
\end{split}
\end{equation}
Concerning the second integral, since $v(y)=\eta(y)=0$ for $y\in B^c_{1-\delta}$, we have
\begin{equation} \label{eq:step2A3b}
\begin{split}
\int_B|\eta(x)u(x)|dx\int_{B^c_{1-\delta}}&\frac{|\eta(x)-\eta(y)||v(x)-v(y)|}{|x-y|^{N+2s}}dy
\\
&=\int_{B_{1-2\delta}} |\eta(x)u(x)v(x)|dx\int_{B^c_{1-\delta}}\frac{dy}{|x-y|^{N+2s}} 
\\
&\leq
C(\delta) \|\eta u\|_{L^{p'}(B)} \|v\|_{L^p(B)}
\leq C(\delta) \|\eta u\|_{L^{p'}(B)} \|\psi\|_{L^m(B)}
\end{split}
\end{equation}
provided that $m>\frac{N}{(1+\alpha )s}$.
Inequalities \eqref{eq:step2A3a} and \eqref{eq:step2A3b} give $|A_3|\leq C(\delta)\|\eta u\|_{L^{p'}(B)}  \|\psi\|_{L^m(B)}$. Hence, using the latter and inequalities \eqref{eq:step2A1}, \eqref{eq:step2A2} we deduce
\[
\left|\int_{B} u \eta^2 \psi \, dx \right| \leq C(\delta)\|\eta u\|_{L^{p'}(B)}  \|\psi\|_{L^m(B)}
\]
for all $\psi \in C^\infty(B)\cap L^\infty(B)$. By density the inequality holds for $\psi \in L^m(B)$ and hence $\eta^2 u \in L^{m'}(B)$ with 
$m' \in (\frac{N}{N-s},\frac{N}{N-(1+\alpha)s})$. Since this is true for any $\alpha\in(0,1)$ we get our claim.

{\it Step three.} We finally show that $u \in L_{\rm loc}^p(B)$ for  $p<\frac Ns$.
\\
We first prove recursively that
\[
u \in L^r_{\text{loc}}(B)\quad\text{for}\quad r<\frac{N}{N-ks}
\]
for all $k\in \mathbb{N}$ such that $k<\frac Ns$. We notice that we already proved the claim for $k=1,2$.
Hence let us assume that $u\in L^{p'}_{\text{loc}}(B)$ for $p'\in\left(\frac{N}{N-(k-\alpha)s},\frac{N}{N-ks}\right)$ for some $\alpha\in(0,1)$. Fix $\delta>0$ to be chosen again as the double of the one selected in the previous step, let $\psi \in C^\infty (B)\cap L^\infty(B)$, take $m\in\left(\frac{N}{(k+1-\alpha)s},\frac{N}{ks}\right)$, and let $v$ be the unique solution of \eqref{eq:dualproblem}. For a cut-off function $\eta$ supported in $B_{1-2\delta}$ we use $\eta^2 v$ as a test function in \eqref{eq:defveryweaksol} to find again
\[
\left|\int_{B}\eta^2 u \psi dx\right| \leq A_1+A_2+A_3,
\]
with $A_1,A_2,A_3$ defined as in \eqref{eq:firststepsummability}.
As before, to estimate $A_1$ we split it in three terms: 
\begin{align} \label{eq:step3A1}
A_1\le&\int_{B_{1-\delta}}|u(x)|dx\int_{B^c_{1-2\delta}}\frac{|\eta(x)-\eta(y)||\eta(x)v(x)-\eta(y)v(y)|}{|x-y|^{N+2s}}dy 
\\ \nonumber
&+\int_{B_{1-\delta}}|u(x)|dx\int_{B_{1-2\delta}}\frac{|\eta(x)-\eta(y)||\eta(x)v(x)-\eta(y)v(y)|}{|x-y|^{N+2s}}dy 
\\ \nonumber
&+\int_{B^c_{1-\delta}}|u(x)|dx\int_{\R^N}\frac{|\eta(x)-\eta(y)||\eta(x)v(x)-\eta(y)v(y)|}{|x-y|^{N+2s}}dy 
\\ \nonumber
=:&\,B_1+B_2+B_3
\end{align}
The same argument used to bound $B_1$ and $B_3$ in Step two provides
\[
|B_1| \leq C(\delta) \|u\|_{L^{p'}(B_{1-2\delta})}\|v\|_{W^{s,p}(\R^N)}
\]
and 
\[
|B_3| \leq C(\delta) \|u\|_{L^1_s(\R^N)}\|v\|_{W^{s,p}(\R^N)}
\]
where at this stage $p\in\left(\frac{N}{ks}, \frac{N}{(k-\alpha)s}\right)$.
For $B_2$ we use H\"{o}lder inequality to have
\[
\begin{split}
|B_2|&\le \int_{B_{1-\delta}}|u(x)|dx\int_{B_{1-2\delta}}\frac{|\eta(x)-\eta(y)||\eta(x)v(x)-\eta(y)v(y)|}{|x-y|^{N+2s}}dy 
\\
&\leq \|u\|_{L^{p'}(B_{1-\delta})} 
\biggl(\int_{B_{1-\delta}}\biggl( \int_{B_{1-\delta}} 
\frac{|\eta(x)v(x)-\eta(y)v(y)|}{|x-y|^{N+2s-1}} dy \biggr)^{p}dx \biggr)^{1/p}
\\
&\leq \|u\|_{L^{p'}(B_{1-\delta})}\biggl(\int_{B_{1-\delta}}\int_{B_{1-\delta}} \frac{|\eta(x)v(x)-\eta(y)v(y)|^p}{|x-y|^{N+sp}} dy   
\cdot \biggl(\int_{B_{1-\delta}} \frac{dy}{|x-y|^{N+(s-1)p'}}\biggr)^{p-1} dx \biggr)^{1/p}
\\
&\leq C  \|u\|_{L^{p'}(B_{1-\delta})}\|\eta v\|_{W^{s,p}(\R^N)} 
\end{split}
\]
Since $p>1$ we have
\[
|A_2|\leq  C(\delta)  \|u\|_{L^{p'}(B_{1-\delta})}\|v\|_{W^{s,p}(\R^N)} 
\]
and using Theorem \ref{th:afly} we get
\[
\|v\|_{W^{s,p}(\R^N)} \leq C \|\psi \|_{L^m(B)}
\]
whenever 
\[
p < \frac{Nm}{N-sm} \qquad \text{i.e.,} \qquad m > \frac{Np}{N+sp}.
\]
Since $p< \frac{N}{(k-\alpha)s}$ we get  
\[
|A_2| \leq C(\delta)  \|u\|_{L^{p'}(B_{1-\delta})}\|\psi\|_{L^m(B)}  
\]
if $m>\frac{N}{(k+1-\alpha)s}$.
The estimate for $A_3$ follows from the same argument and gives
\[
|A_3|\leq C(\delta)  \|u\|_{L^{p'}(B_{1-\delta})}\|\psi\|_{L^m(B)}.
\]
Thus we arrive at 
\[
\left|\int_B \eta^2 u \psi\, dx\right|\leq C(\delta) \|u\|_{L^{p'}(B_{1-\delta})}\|\psi\|_{L^m(B)}
\]
for any $m>\frac{N}{(k+1-\alpha)s}$. Using again a duality argument and since the latter is true for all $\alpha >0$ we get
\[
u \in L_{\text{loc}}^{m'}(B) 
\]
for any $m'<\frac{N}{N-(k+1)s}$.
 
Hence we now run this argument $k_0$ times, where $k_0:=\max\{d\in\mathbb{N};\,d\le\overline{k}\}$ and $\overline{k}:=\frac Ns-1$ to find 
$$
u \in L^r_{\text{loc}}(B)
$$
for all $r <\frac{N}{N-k_0s}\le\frac Ns$. Since in particular $2<\frac 2s\le\frac Ns$ by our assumptions in Section \ref{sec:notandprel} the proof is complete. 
\end{proof}

\begin{remark}
In each step $k$ of the proof of Theorem \ref{th:summabilityimprovement} we choose $\delta_k>0$ such that $\delta_k<\frac 14$ and $\delta_k=2\delta_{k-1}<1$, for $k\in\{1,\ldots,k_0\}$ and these conditions imply that in Step one we have to fix $\delta_1:=\delta<\frac{1}{2^{k_0+2}}$.
\end{remark}

\begin{remark}\label{rem:stimeI}
We notice that from estimates \eqref{eq:stimaalfa}, \eqref{eq:stimagamma}, \eqref{eq:step2B2} and \eqref{eq:step2A3a}, we deduce that for any $w\in L^1_s(\R^N)\cap W^{s,p}_{\text{loc}}(B)$ for some $1<p<\infty$, by definition $\eta w\in W^{s,p}(\R^N)$ for any $\eta\in C^\infty_c(B)$ cut-off function and, if ${\rm supp}\,\eta=B_{1-2\delta}\Subset B_{1-\delta}\Subset B$ there exists $C$, that depends only on $\delta, s, N,p$ but independent of $w$, such that
\begin{equation}
\label{eq:stimaIs1}
\left\|I_s(\eta,\eta w)\right\|_{L^p(\R^N)}\le C\left\|\eta w\right\|_{W^{s,p}(\R^N)}.
\end{equation}
Moreover, for any $x\in \R^N$ we have 
\begin{equation}
\label{eq:stimaIsprel}
\begin{split}
|\eta(x)I_s(\eta,w)(x)|\le&\,\eta(x)\int_{B_{1-\delta}}\frac{|\eta(x)-\eta(y)||w(x)-w(y)|}{|x-y|^{N+2s}}dy 
\\
&+\eta(x)\int_{B^c_{1-\delta}}\frac{|\eta(x)-\eta(y)||w(x)-w(y)|}{|x-y|^{N+2s}}dy 
\\
\le&\, C(\delta)\eta(x)\int_{B_{1-\delta}}\frac{|w(x)-w(y)|}{|x-y|^{N+2s-1}}dy 
\\
&+2\eta(x)\int_{B^c_{1-\delta}}\frac{|w(x)-w(y)|}{|x-y|^{N+2s}}dy 
\\
\le&\, C(\delta)\eta(x)\left(\int_{B_{1-\delta}}\frac{|w(x)-w(y)|^p}{|x-y|^{N+sp}}dy\right)^{1/p}\left(\int_{B_{1-\delta}}\frac{dy}{|x-y|^{N-p'(1-s)}}\right)^{1/p'} 
\\
&+2\eta(x)\left(|w(x)|\int_{B^c_{1-\delta}(x)}\frac{dy}{|x-y|^{N+2s}}+\int_{\R^N}\frac{|w(y)|}{1+|y|^{N+2s}}dy\right).
\end{split}
\end{equation}
From \eqref{eq:stimaIsprel} we deduce that there exists a positive constant $C=C(N,s,\delta,p)$ such that
\begin{equation}
\label{eq:stimaIs2}
\left\|\eta I_s(\eta,w)\right\|_{L^p(\R^N)}\le C(N,s,\delta,p)\left(\left\|w\right\|_{W^{s,p}(B_{1-\delta})}+\left\|w\right\|_{L^1_s(\R^N)}\right).
\end{equation}
The estimates in the cases $p=1$ and $p=\infty$ also hold true with analogous computations. In the case $p=\infty$ we recall that for any $\Omega$ open and smooth set $W^{s,\infty}(\Omega)=C^{0,s}(\overline{\Omega})$, see e.g. \cite[Pag. 59]{DPV}.
\end{remark}

\section{Fractional Sobolev regularity}

In this section we prove the local $H^s$ regularity of very weak solutions, which we know to be in $L^2_{\rm loc}$. In the classical case, Sobolev regularity is usually obtained via the Nirenberg difference quotients method, but in the nonlocal case this method does not work directly because of the presence of a divergent kernel. Therefore, we have devised a different approach, which consists in using another cut-off function (the $\eta_\tau$ below) that eliminates the singularity and makes the relevant integrals convergent. 

\begin{theorem}
	\label{th:sobolevregularity}
	Let $u$ be a very weak solution of \eqref{eq:maineq} in $L^2_{{\rm loc}}(B)$. Then $u\in H^{2s}_{{\rm loc}}(B)$.
\end{theorem}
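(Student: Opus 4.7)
The plan has two parts. First, I would establish $u\in H^s_{\rm loc}(B)$ by a fractional variant of the Nirenberg difference quotient method. Second, I would bootstrap to $H^{2s}_{\rm loc}(B)$ by applying the fractional Leibniz rule and the $L^2$-version of the estimates for the carr\'e du champ operator $I_s$ collected in Remark \ref{rem:stimeI}.

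For the first part, fix nested balls $B''\Subset B'\Subset B$ and a spatial cut-off $\eta\in C_c^\infty(B')$ with $\eta\equiv 1$ on $B''$. For a small parameter $\tau>0$ I would introduce an auxiliary radial cut-off $\eta_\tau:\R^N\to[0,1]$ vanishing on $B_\tau$ and equal to $1$ outside $B_{2\tau}$; this is the device that removes the singularity of the Gagliardo kernel at the origin, which is the fundamental obstruction to applying the classical Nirenberg method in the nonlocal setting. The goal is to bound, uniformly in $\tau$, the truncated seminorm
\[
J_\tau \;:=\; \iint_{\R^N\times \R^N}\frac{|u(x+h)-u(x)|^2}{|h|^{N+2s}}\,\eta_\tau(h)\,\eta^2(x)\,dx\,dh,
\]
and then pass to the limit $\tau\to 0$ by monotone convergence, obtaining $u\in H^s(B'')$ with a linear dependence on $\|u\|_{L^2(B')}+\|u\|_{L^1_s(\R^N)}$. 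To produce such a bound I would test \eqref{eq:defveryweaksol} against a suitable function $\varphi_\tau$ built from translates of $u$ weighted by $\eta^2(x)\eta_\tau(h)|h|^{-N-2s}$ and symmetrised in $h\mapsto -h$, arranged so that $\varphi_\tau$ is compactly supported in $B$ and $(-\Delta)^s\varphi_\tau\in L^\infty_s(\R^N)$ (the cut-off $\eta_\tau$ is what guarantees the latter). After a translation change of variables, the resulting identity should display $J_\tau$ as the leading term together with a remainder that, by Cauchy--Schwarz and Young's inequality, is controllable in terms of $\|u\|_{L^2_{\rm loc}(B)}$ (available from Theorem \ref{th:summabilityimprovement}) and $\|u\|_{L^1_s(\R^N)}$.

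For the second part, pick a new cut-off $\eta\in C_c^\infty(B)$ equal to $1$ on $B_{1-4\delta}$ and supported in $B_{1-2\delta}$. Applying the fractional Leibniz rule twice, or equivalently writing $\eta^2 u=\eta\cdot(\eta u)$, yields
\[
(-\Delta)^s(\eta^2 u)\;=\;\eta^2(-\Delta)^s u\;+\;2\eta u\,(-\Delta)^s\eta\;-\;\eta\, I_s(\eta,u)\;-\;I_s(\eta,\eta u).
\]
When paired against test functions supported in $B$ the first term vanishes because $u$ is very weakly $s$-harmonic in $B$. Since $u\in H^s_{\rm loc}(B)$ by the first part, the $L^2$ version ($p=2$) of the estimates \eqref{eq:stimaIs1}--\eqref{eq:stimaIs2} from Remark \ref{rem:stimeI} shows that the last two terms lie in $L^2(\R^N)$, while $2\eta u(-\Delta)^s\eta\in L^2(\R^N)$ as well, since $\eta u$ is compactly supported and $(-\Delta)^s\eta\in L^\infty_s(\R^N)$. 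Hence $\eta^2 u$ solves $(-\Delta)^s w=f$ on the whole $\R^N$ with $f\in L^2(\R^N)$; since $\eta^2 u$ is itself compactly supported and in $L^2(\R^N)$, Plancherel applied to the multiplier $|\xi|^{2s}$ gives $\eta^2 u\in H^{2s}(\R^N)$. Because $\eta\equiv 1$ on $B_{1-4\delta}$, letting $\delta\to 0$ yields $u\in H^{2s}_{\rm loc}(B)$.

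The principal obstacle is the first part: designing $\varphi_\tau$ with the three simultaneous properties of admissibility in \eqref{eq:defveryweaksol}, production of $J_\tau$ as the leading term after symmetrisation and translation, and $\tau$-uniform control of the remainder. In particular, the interplay between the spatial cut-off $\eta$ and the singularity-removing $\eta_\tau$, together with the boundary contributions generated by translating $x\mapsto x+h$ near $\partial B'$, is where care is needed and where the precise shape of $\eta_\tau$ enters. Once this step is secured, the passage from $H^s_{\rm loc}$ to $H^{2s}_{\rm loc}$ is essentially bookkeeping with the $I_s$ estimates already recorded in Remark \ref{rem:stimeI}.
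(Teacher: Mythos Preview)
Your second part---the bootstrap from $H^s_{\rm loc}$ to $H^{2s}_{\rm loc}$ via the Leibniz identity and the $L^2$ estimates on $I_s$ from Remark~\ref{rem:stimeI}---matches the paper's argument essentially line for line.

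The first part has the right architecture (the pair of cut-offs $\eta$ and $\eta_\tau$, the truncated seminorm, uniformity in $\tau$, monotone convergence), but there is a genuine gap in the choice of test function. You propose to build $\varphi_\tau$ directly from translates of $u$ and assert that ``the cut-off $\eta_\tau$ is what guarantees'' $(-\Delta)^s\varphi_\tau\in L^\infty_s(\R^N)$. That is not true under the sole hypothesis $u\in L^2_{\rm loc}(B)$: any such $\varphi_\tau$ contains a term of the form $\eta^2(x)u(x)\int\eta_\tau(h)|h|^{-N-2s}\,dh$, i.e.\ a constant multiple of $\eta^2 u$, which is only in $L^2$. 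The role of $\eta_\tau$ is to make the \emph{kernel} bounded, not to make $\varphi_\tau$ bounded; the admissibility condition $(-\Delta)^s\varphi_\tau\in L^\infty_s$ in \eqref{eq:defveryweaksol} still fails.

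The paper's device for circumventing exactly this obstruction is to pass through an auxiliary Dirichlet problem: one sets
\[
D^s_{\eta_\tau,\eta}u(x):=\int_{\R^N}\eta_\tau(|x-y|)\,\frac{\eta(x)u(x)-\eta(y)u(y)}{|x-y|^{N+2s}}\,dy,
\]
solves $(-\Delta)^s v=D^s_{\eta_\tau,\eta}u$ in $B_{1-\delta}$ with $v=0$ outside, and takes $\varphi=\eta v$ as the test function. The point is that $v$, being the solution of a Dirichlet problem rather than a raw rearrangement of $u$, carries enough regularity for $\eta v$ to be admissible; and testing the auxiliary equation against $v$ itself yields the pivotal bound $[v]_{H^s(\R^N)}\le \sqrt{G^s_{\eta_\tau,\eta}(u)}$, which is what closes the estimate on the truncated seminorm uniformly in $\tau$. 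Your plan is missing this duality step (or an equivalent substitute, such as a mollification of $u$ with a separate limiting argument), and without it the admissibility of $\varphi_\tau$ cannot be secured.
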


\begin{proof} Let $u$ be given as in the statement. Let $\tau\in(0,1/2)$ and let $\eta_\tau:[0,+\infty)\to[0,1]$ be the cut-off function defined as
$$
	\eta_\tau(t):=\left\{
	\begin{array}{ll}
	0 &\text{if $0\le t\le\tau/2$,}
	\\
	\frac{2}{\tau}t-1&\text{if $\tau/2\le t\le\tau$,}
	\\
	1&\text{if $\tau\le t$,}
	\end{array}
	\right.
$$
For $\delta\in\left(0,\frac 14\right)$, let us consider another cut-off function $\eta:\R^N\to[0,1]$ such that
	$$
	\eta=1\ \text{in $B_{1-4\delta}$},\quad \eta=0\ \text{in $B^c_{1-2\delta}$},\quad |\nabla\eta|\le\frac 1\delta.
	$$
	For any $x\in\R^N$ let us define the function
	\begin{equation}\label{eq:difquo}
	D^s_{\eta_\tau,\eta}u(x):=\int_{\R^N}\eta_\tau(|x-y|)\frac{\eta(x)u(x)-\eta(y)u(y)}{|x-y|^{N+2s}}\:dy.
	\end{equation}
In order to prove the required regularity, as a test function in \eqref{eq:defveryweaksol} we choose 
	$$
	\varphi(x):=\eta(x)v(x),
	$$
	where $v$ is the solution of the problem
	\begin{equation}\label{eq:auxveryweak}
	\begin{cases}
(-\Delta)^s w=D^s_{\eta_\tau,\eta}u &\text{in $B_{1-\delta}$}
\\
	w=0 &\text{in $B_{1-\delta}^c$}.
	\end{cases}
	\end{equation}
We notice that $\varphi$ is an admissible test function since $(-\Delta)^{s}\varphi \in L^\infty_{s}(\R^N)\subset L^2(\R^N)$. 	
	Since $\eta$ is supported in $B_{1-2\delta}$, we have 
	\begin{equation*}
	\begin{split}
	0&=\int_{\R^N}u(-\Delta)^s\varphi\:dx=\int_{\R^N}u(-\Delta)^s(\eta v)\:dx\\
	&=\int_{B_{1-\delta}}uv(-\Delta)^s\eta\:dx+\int_{B_{1-2\delta}}u \eta(-\Delta)^s v\:dx-\int_{\R^N}uI_s(\eta,v)dx,
	\end{split}
	\end{equation*}
where $I_s$ is defined in \eqref{defIs}. It follows
	\begin{equation}\label{eq:splitestim}
	\begin{split}
	\left|\int_{B_{1-2\delta}}u \eta D^s_{\eta_\tau,\eta}u\:dx\right|&\le\left|\int_{B_{1-\delta}}u v(-\Delta)^s\eta\:dx\right|+\left|\int_{\R^N}uI_s(\eta,v)dx\right|=:|C_1|+|C_2|.
	\end{split}
	\end{equation}
First of all, rewrite the left hand side of \eqref{eq:splitestim} as
	\begin{align}
	\int_{\R^N}u\eta D^s_{\eta_\tau,\eta}u\:dx&=\int_{\R^N}u(x)\eta(x)dx\int_{\R^N}\eta_\tau(|x-y|)\frac{\eta(x)u(x)-\eta(y)u(y)}{|x-y|^{N+2s}}\:dy
\label{eq:estIII}	\\ \nonumber
	&=\frac{1}{2}\int_{\R^N}dx\int_{\R^N}\eta_\tau(|x-y|)\frac{(\eta(x)u(x)-\eta(y)u(y))^2}{|x-y|^{N+2s}}\:dy=:\frac{1}{2} G^s_{\eta_\tau,\eta}(u).
	\end{align}
	Let us estimate the term $C_2$. We write $C_2=C_3+C_4$, where
	$$
	C_3:=\int_{B_{1-\delta}}u(x)dx\int_{\R^N}\frac{(v(x)-v(y))(\eta(x)-\eta(y))}{|x-y|^{N+2s}}\:dy
	$$
	and
	$$
	C_4:=\int_{B^c_{1-\delta}}u(x)dx\int_{\R^N}\frac{(v(x)-v(y))(\eta(x)-\eta(y))}{|x-y|^{N+2s}}\:dy.
	$$
	We have
	\begin{equation}\label{eq:estII.1}
	\begin{split}
	|C_3|&=\biggl|\int_{B_{1-\delta}}u(x)dx\int_{\R^N}\frac{(v(x)-v(y))(\eta(x)-\eta(y))}{|x-y|^{N+2s}}\:dy\biggr|
	\\
	&\le\|u\|_{L^2(B_{1-\delta})}\biggl(\int_{\R^N}\biggl(\int_{\R^N}\frac{(v(x)-v(y))(\eta(x)-\eta(y))}{|x-y|^{N+2s}}\:dy\biggr)^2\:dx\biggr)^{1/2}
	\\
	&\le\|u\|_{L^2(B_{1-\delta})}\biggl(\int_{\R^N}\biggl(\int_{\R^N}\frac{(v(x)-v(y))^2}{|x-y|^{N+2s}}\:dy\biggr)\biggl(\int_{\R^N}\frac{(\eta(x)-\eta(y))^2}{|x-y|^{N+2s}}\:dy\biggr)\:dx\biggr)^{1/2}
	\\
	&\le C(\eta)\|u\|_{L^2(B_{1-\delta})}\biggl(\int_{\R^N}\int_{\R^N}\frac{(v(x)-v(y))^2}{|x-y|^{N+2s}}\:dy\:dx\biggr)^{1/2} = C(\eta)\|u\|_{L^2(B_{1-\delta})}\,
	[v]_{H^s(\R^N)}.
	\end{split}
	\end{equation}
	Now, in order to estimate the right hand side in \eqref{eq:estII.1}, we use that $v$ is a weak solution of  \eqref{eq:auxveryweak}; by testing against $v$ itself we obtain
\begin{align}	\label{eq:crucialestimate} 
[v]^2_{H^s(\R^N)}=&
\int_{\R^N}dx\int_{\R^N}\frac{(v(x)-v(y))^2}{|x-y|^{N+2s}}\:dy
=2\int_{\R^N}v(-\Delta)^sv\:dx=2\int_{\R^N}v D^s_{\eta_\tau,\eta}u \:dx
\\ \nonumber
=&2\int_{\R^N}v(x)dx\int_{\R^N}\frac{(\eta(x)u(x)-\eta(y)u(y))}{|x-y|^{N+2s}}\eta_\tau(|x-y|)\:dy
\\ \nonumber
=&\int_{\R^N}dx\int_{\R^N}\left(\frac{(v(x)-v(y))}{|x-y|^{\frac{N+2s}{2}}}\sqrt{\eta_\tau(|x-y|)}\right)\left(\frac{(\eta(x)u(x)-\eta(y)u(y))}{|x-y|^{\frac{N+2s}{2}}}\sqrt{\eta_\tau(|x-y|)}\right)dy
\\ \nonumber
\le&\left(\int_{\R^N}dx\int_{\R^N}\frac{(v(x)-v(y))^2}{|x-y|^{N+2s}}\eta_\tau(|x-y|)\:dy\right)^{1/2} \cdot
\\ \nonumber
&\left(\int_{\R^N}dx\int_{\R^N}\frac{(\eta(x)u(x)-\eta(y)u(y))^2}{|x-y|^{N+2s}}\eta_\tau(|x-y|)dy\right)^{1/2}
\\ \nonumber
\le&[v]_{H^s(\R^N)}\sqrt{G^s_{\eta_\tau,\eta}(u)},
\end{align}
	where in the first estimate we applied the H\"older inequality with exponent $2$ and in the second one we took into account that $\|\eta_\tau\|_{L^\infty((0,\infty))}=1$. Summarising, \begin{equation}\label{eq:sobolev}
	[v]_{H^s(\R^N)}\le \sqrt{G^s_{\eta_\tau,\eta}(u)}
\end{equation}	
and thus by \eqref{eq:estII.1} we get
	\begin{equation}\label{eq:estII.2}
	|C_3|\le C(\eta)\|u\|_{L^2(B_{1-\delta})} 
\sqrt{G^s_{\eta_\tau,\eta}(u)}	.
	\end{equation}
Let us estimate $C_4$. Since $\eta(x)=\eta(y)=0$ for $x\in B^c_{1-\delta}$ and $y\in B^c_{1-2\delta}$ we have 
	\begin{equation*}
	\begin{split}
	|C_4|&=\left|\int_{B_{1-\delta}^{c}}u(x)dx\int_{\R^N}\frac{(v(x)-v(y))(\eta(x)-\eta(y))}{|x-y|^{N+2s}}\:dy\right|
	\\
	&=\left|\int_{B_{1-\delta}^{c}}u(x)dx\int_{B_{1-2\delta}}\frac{(v(x)-v(y))(\eta(x)-\eta(y))}{|x-y|^{N+2s}}\:dy\right|,
	\end{split}
	\end{equation*}
	and then 
	\begin{equation}\label{eq:estII.3}
	\begin{split}
	|C_4|&=\left|\int_{B_{1-\delta}^{c}}\frac{u(x)}{1+|x|^{N+2s}}dx\int_{B_{1-2\delta}}\left(\frac{1+|x|^{N+2s}}{|x-y|^{N+2s}}\right)v(y)\eta(y)\:dy\right|
	\\
	&\le\int_{B_{1-\delta}^{c}}\frac{|u(x)|}{1+|x|^{N+2s}}dx\int_{B_{1-2\delta}}\left(1+\frac{1+|y|}{|x-y|}\right)^{N+2s}|v(y)|\:dy
	\\
	&\le C(\delta)\|u\|_{L_s^{1}(\R^{N})}[v]_{H^{s}(\R^N)}
	\\
	&\le C(\delta)\|u\|_{L_s^{1}(\R^{N})}\sqrt{G^s_{\eta_\tau,\eta}(u)}.
	\end{split}
	\end{equation}
In the third inequality we exploited \eqref{eq:sobolev}, the second one follows from the H\"{o}lder inequality and the fractional Sobolev inequality, see \cite[Theorem 6.5]{DPV}; taking into account that $v$ has compact support in $B_{1-\delta}$ we have:
\[
\|v\|_{L^1(B_{1-2\delta})}\le |B_{1-2\delta}|^{\frac{2_s^*-1}{2_s^*}}
\|v\|_{L^{2_s^*}(\R^N)}\le C(\delta,N,s) [v]_{H^s(\R^N)}.
\]
Now, let us estimate the term $C_1$:
	\begin{equation}\label{eq:estI.1}\begin{split}
|C_1|&=\left|\int_{B_{1-\delta}} u v(-\Delta)^s\eta\:dx\right|\le\|u\|_{L^{(2^*_s)'}(B_{1-\delta})}\|v\|_{L^{2^*_s}(\R^N)}
\|(-\Delta)^s\eta\|_{L^\infty(\R^N)} 
\\
&\le C(\delta)[v]_{H^{s}(\R^N)}\leq 
C(\delta) \sqrt{G^s_{\eta_\tau,\eta}(u)},
\end{split}	
	\end{equation}
	where $C(\delta)>0$ can be explicitly computed and in the second and third inequalities we exploited again the fractional Sobolev inequality and \eqref{eq:sobolev}, respectively.
	
	By putting together \eqref{eq:estII.1}, \eqref{eq:estII.3} and \eqref{eq:estI.1} in \eqref{eq:splitestim} and using equality \eqref{eq:estIII} we have
	\begin{equation}
	\label{eq:finalestimate}
	G^s_{\eta_\tau,\eta}(u)\le C(\left\|u\right\|_{L^1_s(\R^N)},\left\|u\right\|_{L^2(B_{1-\delta})},\delta,\eta).
	\end{equation}
	Now, we recall that $\eta_\tau$ depends on the parameter $\tau\in(0,1/2)$ but the estimate \eqref{eq:finalestimate} is uniform with respect $\tau$ because the right-hand side is independent of $\tau$. Therefore, estimate \eqref{eq:finalestimate} finally yields
	\begin{equation}
	\label{eq:conclusion}
	[\eta u]^2_{H^s(\R^N)}\le\sup_{\tau\in(0,1/2)}G^s_{\eta_\tau,\eta}(u)<\infty,
	\end{equation}
	where the first inequality holds true in view of Fatou's Lemma. Thus $\eta u\in H^{s}(\R^N)$ and then $u\in H_{\text{loc}}^{s}(B)$. 
	
	In order to complete the proof, let us show that $\eta^2 u$ is a compactly supported weak solution of 
	 $(-\Delta)^sw=f$, with $f\in L^2(\R^N)$. This implies that $\eta^2u\in H^{2s}(\R^N)$. 

	For any $\varphi\in C^\infty_c(\R^N)$ we have
	\begin{equation*}
	\begin{split}
	\frac{1}{2}\int_{\R^N}\:dx&\int_{\R^N}\frac{(\eta^2(x)u(x)-\eta^2(y)u(y))(\varphi(x)-\varphi(y))}{|x-y|^{N+2s}}\:dy\\
	&=\int_{\R^N}\: dx\int_{\R^N}\frac{\eta^2(x)u(x)(\varphi(x)-\varphi(y))}{|x-y|^{N+2s}}\:dy\\
	&=C^{-1}_{N,s}\int_{\R^N}u(x)(-\Delta)^s(\eta^2 \varphi)(x)\:dx+\int_{\R^N}\: dx\int_{\R^N}\frac{u(x)\varphi(y)(\eta^2(y)-\eta^2(x))}{|x-y|^{N+2s}}\:dy\\
	&=\int_{\R^N}\varphi(x)\:dx\int_{\R^N}\frac{u(y)(\eta^2(x)-\eta^2(y))}{|x-y|^{N+2s}}dy, 
	\end{split}
	\end{equation*}
	where in the last equality we used that $\eta^2\varphi\in C^\infty_c(B)$ and that $u$ is a very weak solution of \eqref{eq:maineq}. To conclude, we show that the function $f$ defined a.e. by
	\begin{equation}
	\label{eq:sourcef}
	f(x):=C_{N,s}\int_{\R^N}\frac{u(y)(\eta^2(x)-\eta^2(y))}{|x-y|^{N+2s}}dy
	\end{equation}
	belongs to $L^2(\R^N)$.
	We point out that
	\begin{align}
	\label{eq:explicitf}
	f(x)&=C_{N,s}\int_{\R^N}\frac{u(y)(\eta(x)+\eta(y))(\eta(x)-\eta(y))}{|x-y|^{N+2s}}dy
	\\ \nonumber
	&=C_{N,s}\int_{\R^N}\frac{\left[2u(x)\eta(x)+u(y)\eta(y)-u(x)\eta(x)+\eta(x)(u(y)-u(x))\right](\eta(x)-\eta(y))}{|x-y|^{N+2s}}dy
	\\ \nonumber
	&=2\eta(x)u(x)(-\Delta)^s\eta(x)-I_s(\eta,\eta u)(x)-\eta(x)I_s(\eta,u)(x).
	\end{align}
	The last equality shows that $f\in L^2(\R^N)$, as $I_s(\eta,\eta u)$ and $\eta I_s(\eta,u)$ belong to $L^2(\R^N)$ in view of Remark \ref{rem:stimeI} and $\eta u(-\Delta)^s\eta$, supported in $B_{1-2\delta}$, belongs to $L^2(\R^N)$ since $u\in L^2_{\rm loc}(B)$.
	
We thus obtain that $\eta^2 u\in H^{2s}(\R^N)$, and so $u\in H^{2s}_{\text{loc}}(B)$.
\end{proof}

\section{Full regularity}

In this section we prove that a very weak $s$-harmonic function $u$ is actually a classical $s$-harmonic function hence locally smooth. 
To do this, we use the fact that $u$ is locally bounded and by fractional De Giorgi estimates proved in \cite[Theorem 1.4.]{BrLiSc} also belongs to $C^{0,\gamma}_{\rm loc}(B)$ for every $\gamma\in\left(0,\min\{2s,1\}\right)$.
Then, in Propositions \ref{prop:estimatesIs} and \ref{prop:estimateetaIs} we prove that the operator $I_s$ enjoys useful H\"older continuity properties that allow us to exploit that the function $\eta^2u$ solves the equation $(-\Delta)^sw=f\in C^{0,\gamma}(\R^N)$, and then $u\in C^{\gamma+2s}_{\rm loc}(B)\cap L^1_s(\R^N)$. Therefore $u$ is $s$-harmonic in $B$ in the classical sense and, by Theorem \ref{th:analyticity}, is real analytic in $B$.
\begin{theorem}
	\label{th:BrLiSc}
Let $u\in H^{2s}_{\rm loc}(B)\cap L^1_s(\R^N)$ a very weak $s$-harmonic function in $B$. Then $u\in C^{0,\alpha}_{\rm loc}(B)$ for every $\alpha=\alpha(s)\in(0,\min\{2s,1\})$.
\end{theorem}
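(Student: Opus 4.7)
The plan is to upgrade the notion of solution step by step until we are in a position to quote directly the fractional De Giorgi estimate of Brasco--Lindgren--Schikorra, which is explicitly designed for local weak $s$-harmonic functions with a finite nonlocal tail.

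\textbf{Step 1: From very weak to local weak.} The assumption $u\in H^{2s}_{\rm loc}(B)\cap L^1_s(\R^N)$ allows us to interpret the representation \eqref{eq:fractionallaplacian} pointwise almost everywhere in $B$, so that $(-\Delta)^s u$ is a well defined function in $L^2_{\rm loc}(B)$. Testing the very weak formulation \eqref{eq:defveryweaksol} against $\varphi\in C^\infty_c(B)$ and using Fubini (which is justified exactly as in the end of the proof of Theorem~\ref{th:sobolevregularity}), one obtains
\[
\int_{\R^N}\!\int_{\R^N}\frac{(u(x)-u(y))(\varphi(x)-\varphi(y))}{|x-y|^{N+2s}}\,dx\,dy
= \frac{2}{C_{N,s}}\int_{B}\varphi\,(-\Delta)^s u\,dx = 0.
\]
Hence $u$ is a \emph{local weak} solution of $(-\Delta)^s u=0$ in $B$ in the sense required by \cite{BrLiSc}, and the tail $\mathrm{Tail}(u;x_0,r)$ is finite at every $x_0\in B$ precisely because $u\in L^1_s(\R^N)$.

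\textbf{Step 2: Local boundedness.} Given $B_r(x_0)\Subset B$, we apply the local boundedness estimate of the cited De Giorgi-type theorem to $u$ on $B_r(x_0)$. This produces a bound of the type
\[
\|u\|_{L^\infty(B_{r/2}(x_0))}\le C\bigl(r^{-N/2}\|u\|_{L^2(B_r(x_0))}+\mathrm{Tail}(u;x_0,r)\bigr),
\]
where the right hand side is finite thanks to $u\in H^{2s}_{\rm loc}(B)\subset L^2_{\rm loc}(B)$ and $u\in L^1_s(\R^N)$. A covering argument then yields $u\in L^\infty_{\rm loc}(B)$.

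\textbf{Step 3: Hölder regularity.} Once local boundedness is available, the quantified Hölder estimate of \cite[Theorem~1.4]{BrLiSc} applies and gives $u\in C^{0,\alpha}_{\rm loc}(B)$ for every $\alpha\in(0,\min\{2s,1\})$, with a local estimate of the form
\[
[u]_{C^{0,\alpha}(B_{r/2}(x_0))}\le C\bigl(\|u\|_{L^\infty(B_r(x_0))}+\mathrm{Tail}(u;x_0,r)\bigr).
\]
The only subtle point in the whole argument, and the one I would be most careful about, is not a computation but a bookkeeping issue: one has to verify that the notion of local weak $s$-harmonic function used in \cite{BrLiSc} genuinely coincides with what Step~1 provides, and that the tail functional appearing in their estimates is controlled by $\|u\|_{L^1_s(\R^N)}$. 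Both checks are routine in view of the integrability built into $L^1_s(\R^N)$, but they are the only bridge between our functional framework and the De Giorgi machinery we are borrowing.
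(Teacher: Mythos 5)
Your proposal is correct and follows essentially the same route as the paper: observe that $u\in H^{2s}_{\rm loc}(B)\cap L^1_s(\R^N)$ makes $u$ a local weak solution, then invoke the local boundedness result of Brasco--Lindgren--Schikorra (the paper cites their Theorem~3.2 and Remark~3.3), and finally apply their Theorem~1.4 for the quantified Hölder exponent. The only caveat is presentational: in your chain of equalities the ``$=0$'' should come from rewriting the very weak identity $\int u(-\Delta)^s\varphi\,dx=0$ as the symmetric bilinear form via Fubini, rather than from an integration-by-parts identity for $(-\Delta)^s u$; logically you have the right ingredients, but the display as written appears to assume the conclusion.
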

\begin{proof}
Since $u\in H^{2s}_{\rm loc}(B)\cap L^1_s(\R^N)$ then $u$ is a local weak solution of \eqref{eq:maineq} and by \cite[Theorem 3.2., Remark 3.3.]{BrLiSc} one has that $u\in L^\infty_{\rm loc}(B)$. Then the claim plainly follows from \cite[Theorem 1.4.]{BrLiSc}.
\end{proof}

\begin{proposition}
	\label{prop:estimatesIs}
	Let $s\in(0,1)$ and $\alpha\in(s,\min\{2s,1\})$. For any $f\in C^{0,\alpha}(\R^N)$ and $g\in C^{0,1}(\R^N)$ we have that $I_s(f,g)\in C^{0,\gamma(\alpha,s)}(\R^N)$ where 
	$\gamma(\alpha,s):=\begin{cases}
		2\alpha-2s\quad\text{if}\quad0<s\le\frac12 \\
		\alpha-2s+1\quad\text{if}\quad\frac12<s<1
	\end{cases}$ and
	$$
	[I_s(f,g)]_{C^{0,\gamma(\alpha,s)}(\R^N)}\le C[f]_{C^{0,\alpha}(\R^N)}[g]_{C^{0,1}(\R^N)}
	$$
\end{proposition}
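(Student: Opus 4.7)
The plan is to estimate $|I_s(f,g)(x) - I_s(f,g)(z)|$ directly in terms of $h := |x-z|$. After substituting $y \mapsto y+x$ in the integral at $x$ and $y \mapsto y+z$ in the one at $z$, I rewrite
\[
I_s(f,g)(x) - I_s(f,g)(z) = C_{N,s}\int_{\R^N}\frac{F(x,w) - F(z,w)}{|w|^{N+2s}}\,dw,
\]
where $F(t,w) := (f(t)-f(t+w))(g(t)-g(t+w))$, and split the domain at $|w|=h$. On the near region $\{|w| \le h\}$, the direct pointwise estimate $|F(t,w)| \le [f]_\alpha [g]_1 |w|^{\alpha+1}$ together with $\alpha+1 > 2s$ (a consequence of $\alpha>s$ and $s<1$) bounds the contribution by $C[f]_\alpha [g]_1 h^{\alpha+1-2s}$; since $\alpha+1-2s \ge \gamma(\alpha,s)$ in both regimes (using $\alpha \le 1$), this is dominated by $C[f]_\alpha [g]_1 h^{\gamma(\alpha,s)}$.

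For the far region $\{|w|>h\}$ I use the commutator-type identity
\[
F(x,w) - F(z,w) = (a-a')\,b + a'\,(b-b'),
\]
with $a := f(x)-f(x+w)$, $a' := f(z)-f(z+w)$, $b := g(x)-g(x+w)$, $b' := g(z)-g(z+w)$, combined with the bounds $|a-a'|\le 2[f]_\alpha h^\alpha$, $|b-b'|\le 2[g]_1 h$, $|a'|\le [f]_\alpha |w|^\alpha$, $|b|\le [g]_1 |w|$. These yield $|F(x,w)-F(z,w)| \le C[f]_\alpha[g]_1(h^\alpha |w| + h|w|^\alpha)$. When $s>1/2$, both integrals $\int_{|w|>h}|w|^{1-N-2s}\,dw$ and $\int_{|w|>h}|w|^{\alpha-N-2s}\,dw$ converge (since $\alpha<2s$ and $1<2s$), and their sum produces $C[f]_\alpha[g]_1 h^{\alpha+1-2s}$, matching $\gamma(\alpha,s) = \alpha-2s+1$.

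The delicate case, and the main obstacle, is $s \le 1/2$, where $\int_{|w|>h}|w|^{1-N-2s}\,dw$ diverges at infinity. My plan is to split the far region once more at $|w|=1$. On the intermediate annulus $\{h<|w|\le 1\}$, the elementary inequality $|w|\le |w|^\alpha$ (valid since $|w|\le 1$ and $\alpha \le 1$) upgrades the commutator bound to $C[f]_\alpha[g]_1 h^\alpha |w|^\alpha$; integrating $|w|^{\alpha-N-2s}$ between $h$ and $1$ produces a factor $\sim h^{\alpha-2s}$ (thanks to $\alpha<2s$), so the contribution is $C[f]_\alpha[g]_1 h^{2\alpha-2s}$, which is precisely where the exponent $\gamma(\alpha,s) = 2\alpha-2s$ arises. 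On $\{|w|>1\}$, the boundedness of $f$ and $g$ built into the spaces $C^{0,\alpha}(\R^N)$ and $C^{0,1}(\R^N)$ allows one to bound $|F(x,w)-F(z,w)|\le C h^\alpha$ uniformly via the same commutator identity; since $\int_{|w|>1}|w|^{-N-2s}\,dw<\infty$, this yields an $O(h^\alpha)$ contribution, which is absorbed into $h^{\gamma(\alpha,s)}$ because $h^\alpha \le h^{2\alpha-2s}$ for $\alpha \le 2s$.
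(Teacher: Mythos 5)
Your proof is correct and follows the same core strategy as the paper's: write the difference $I_s(f,g)(x)-I_s(f,g)(z)$ by the commutator identity $(a-a')b+a'(b-b')$ and split the integral at the scale $|w|=|x-z|$, using the pointwise bound $|F(t,w)|\le [f]_\alpha[g]_1|w|^{1+\alpha}$ on the inner region and the H\"older/Lipschitz increments of $f,g$ at scale $|x-z|$ on the outer region. The one genuine point of divergence is how you close the outer estimate when $s\le 1/2$, where $\int_{|w|>h}|w|^{1-N-2s}\,dw$ diverges. The paper's proof trades the Lipschitz seminorm $[g]_{C^{0,1}}$ for the H\"older seminorm $[g]_{C^{0,\alpha}}$ on the term $J_4$, which makes the tail converge at the cost of quietly invoking that $C^{0,1}(\R^N)\subset C^{0,\alpha}(\R^N)$ with $[g]_{C^{0,\alpha}}$ controlled by $[g]_{C^{0,1}}+\|g\|_{L^\infty}$ (so the stated right-hand side $[f]_{C^{0,\alpha}}[g]_{C^{0,1}}$ is a slight abuse). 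You instead split once more at $|w|=1$, use $|w|\le|w|^\alpha$ on the annulus to land on $h^{2\alpha-2s}$, and use $\|g\|_{L^\infty}$ on $\{|w|>1\}$ to produce an $O(h^\alpha)$ remainder absorbed by $h^{2\alpha-2s}$ for $h\le 1$. The two resolutions are equivalent once one grants, as the paper implicitly does, that the global H\"older classes come with finite sup norms; your version makes the role of $\|g\|_{L^\infty}$ explicit, while the paper's is more compact. Both arguments, like the paper's, only establish the seminorm bound for $|x-z|\le 1$, which is harmless for the intended application to cut-off functions.
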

\begin{proof}
	Let $x, x'\in\R^N$, $x\ne x'$, and let $R:=|x-x'|$. We estimate
	\begin{align}
	\label{eq:VarIs}
	I_s(f,g)(x)&-I_s(f,g)(x')
	\\ \nonumber
	&=\int_{\R^N}\frac{(f(x)-f(x-y))(g(x)-g(x-y))-(f(x')-f(x'-y))(g(x')-g(x'-y))}{|y|^{N+2s}}dy.
	\end{align}
	By adding ad subtracting $(f(x)-f(x-y))(g(x')-g(x'-y))$ in the numerator of the integrand in \eqref{eq:VarIs} we can equivalently write
	$$
	I_s(f,g)(x)-I_s(f,g)(x')=J_{1}+J_{2}+J_{3}+J_{4}
	$$
	where
	$$
	J_{1}:=\int_{B_R}\frac{(f(x)-f(x-y))\left[(g(x)-g(x-y))+(g(x'-y)-g(x'))\right]}{|y|^{N+2s}}dy,
	$$
	$$
	J_{2}:=\int_{B^c_R}\frac{(f(x)-f(x-y))\left[(g(x)-g(x'))+(g(x'-y)-g(x-y))\right]}{|y|^{N+2s}}dy,
	$$
	$$
	J_{3}:=\int_{B_R}\frac{(g(x')-g(x'-y))\left[(f(x)-f(x-y))+(f(x'-y)-f(x'))\right]}{|y|^{N+2s}}dy,
	$$
	$$
	J_{4}:=\int_{B^c_R}\frac{(g(x')-g(x'-y))\left[(f(x)-f(x'))+(f(x'-y)-f(x-y))\right]}{|y|^{N+2s}}dy.
	$$ 
Now if $s\le\frac12$ we have 
	$$
	|J_{1}|\le C[f]_{C^\alpha(\R^N)}[g]_{C^{0,1}(\R^N)}\int_{B_R}\frac{dy}{|y|^{N+2s-\alpha-1}}=C[f]_{C^\alpha(\R^N)}[g]_{C^{0,1}(\R^N)}|x-x'|^{\alpha-2s+1},
	$$
	and
	$$
	|J_{2}|\le C[f]_{C^\alpha(\R^N)}[g]_{C^{0,1}(\R^N)}|x-x'|\int_{B^c_R}\frac{dy}{|y|^{N+2s-\alpha}}=C[f]_{C^\alpha(\R^N)}[g]_{C^{0,1}(\R^N)}|x-x'|^{\alpha-2s+1}.
	$$
	The estimate of $J_{3}$ is analogous to the one of $J_{1}$ while for $J_{4}$ we have 
	$$
	|J_{4}|\le C[f]_{C^\alpha(\R^N)}[g]_{C^{\alpha}(\R^N)}|x-x'|^{\alpha}\int_{B^c_R}\frac{dy}{|y|^{N+2s-\alpha}}=C[f]_{C^\alpha(\R^N)}[g]_{C^\alpha(\R^N)}|x-x'|^{2\alpha-2s}.
	$$
	Since $2\alpha-2s<\alpha-2s+1$ we get the thesis.
	
	If $s>\frac12$ the estimates of $J_{1}, J_{2}, J_{3}$ are analogous to the previous case, while for $J_{4}$ we can write
	$$
	|J_{4}|\le C[f]_{C^\alpha(\R^N)}[g]_{C^{0,1}(\R^N)}|x-x'|^{\alpha}\int_{B^c_R}\frac{dy}{|y|^{N+2s-1}}=C[f]_{C^\alpha(\R^N)}[g]_{C^{0,1}(\R^N)}|x-x'|^{\alpha-2s+1}.
	$$
	Hence $I_s(f,g)\in C^{\alpha-2s+1}(\R^N)$.
	\end{proof}

\begin{proposition}
	\label{prop:estimateetaIs}
	Let $s\in(0,1)$, $\alpha\in(s,\min\{2s,1\})$ and $\beta=\beta(\alpha):=\frac{\alpha}{\alpha+1}<\frac{\alpha}{2s}$. If $f\in C^{0,\alpha}_{\text{loc}}(B)\cap L^1_s(\R^N)$ and $\eta\in C^\infty_c(B)$ then $\eta I_s(\eta,f)\in C^{0,\gamma}(\R^N)$ where $\gamma:=(\alpha-2s+1)\beta$ and
	$$
	\left\|\eta I_s(\eta,f)\right\|_{C^{0,\gamma}(\R^N)}\le C\left(\left\|f\right\|_{C^{0,\alpha}(B_{r'})}+\left\|f\right\|_{L^1_s(\R^N)}\right)
	$$
	where $r'\in(0,1)$ is such that $B_{r'}={\rm supp}\,\eta$.
\end{proposition}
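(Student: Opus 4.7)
My plan is to combine an $L^{\infty}$ bound on $\eta I_s(\eta,f)$ with a local H\"older estimate, and then balance the two at an appropriate scale to obtain the stated global regularity.

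First, by adapting the pointwise computations carried out in Remark \ref{rem:stimeI} to the case $p=\infty$ (and exploiting $C^{0,\alpha}(B_{r'})\hookrightarrow W^{s,\infty}(B_{r'})$ since $\alpha>s$), I would establish
\[
\|\eta I_s(\eta,f)\|_{L^{\infty}(\R^N)}\le C\bigl(\|f\|_{C^{0,\alpha}(B_{r'})}+\|f\|_{L^{1}_s(\R^N)}\bigr).
\]
The short-range part of the integral defining $I_s(\eta,f)(x)$ is integrable thanks to $|\eta(x)-\eta(y)||f(x)-f(y)|\le C|x-y|^{1+\alpha}$ (using Lipschitz of $\eta$ together with the $C^{0,\alpha}$ norm of $f$ on $B_{r'}$), while the long-range part is controlled via the elementary bound $|x-y|^{-(N+2s)}\le C(1+|y|^{N+2s})^{-1}$, valid for $x$ bounded, paired with $\|f\|_{L^{1}_s}$. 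The compact support of $\eta$ handles $x\notin B_{r'}$ trivially.

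For the H\"older quotient, I would fix $x,x'\in\R^N$ with $R:=|x-x'|$ and use the split
\[
\eta(x)I_s(\eta,f)(x)-\eta(x')I_s(\eta,f)(x')=(\eta(x)-\eta(x'))\,I_s(\eta,f)(x')+\eta(x)\bigl(I_s(\eta,f)(x)-I_s(\eta,f)(x')\bigr),
\]
and then mimic the four-piece decomposition $J_1+J_2+J_3+J_4$ used in the proof of Proposition \ref{prop:estimatesIs} to bound the second summand. Contributions are controlled by $[f]_{C^{0,\alpha}(B_{r'})}$ in the regime where both $x-y$ and $x'-y$ stay in $B_{r'}$, while in the complementary tail $\eta$ kills the $\eta(x-y)$ and $\eta(x'-y)$ terms and only the smooth increment $\eta(x)-\eta(x')$ survives, allowing a bound via $\|f\|_{L^{1}_s}$.

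The main obstacle is the intermediate regime where $|y|$ is small enough that $\eta(\,\cdot-y)$ still interacts with $\eta$, yet large enough that $x-y$ or $x'-y$ exits $B_{r'}$: in this annular region $C^{0,\alpha}(B_{r'})$ control of $f$ is unavailable and one must compensate with the weaker $\|f\|_{L^{1}_s}$, which degrades the natural $R^{\alpha-2s+1}$ scaling predicted by Proposition \ref{prop:estimatesIs}. Carefully tracking the $R$-dependence of this deficit and balancing it against the $L^{\infty}$ bound from the first step leads to an effective interpolation parameter $\beta=\alpha/(\alpha+1)$ and the H\"older exponent $\gamma=(\alpha-2s+1)\beta$. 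The inequality $\beta<\alpha/(2s)$ recorded in the statement (which follows from $\alpha<2s$) guarantees $\gamma<1$, placing the outcome in the genuine H\"older range and making the estimate usable as input in the bootstrap for $(-\Delta)^{s}(\eta^{2}u)=f$ at the final regularity step.
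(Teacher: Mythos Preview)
Your overall architecture---an $L^\infty$ bound plus a local H\"older estimate---is sound, and the $L^\infty$ step is fine. The gap is in how you propose to obtain the H\"older increment. Transplanting the $J_1+\cdots+J_4$ decomposition of Proposition~\ref{prop:estimatesIs} (which is written in translated variables $y\mapsto x-y$ and splits at the fixed scale $R=|x-x'|$) does not work here, because $f$ is only locally $C^{0,\alpha}$ with an $L^1_s$ tail. Concretely, in $J_4$ the term $f(x'-y)-f(x-y)$ for $|y|>R$ lands at points where no H\"older control is available; the best you can do is
\[
\int_{|y|>R}\frac{|\eta(x')-\eta(x'-y)|\,|f(x'-y)-f(x-y)|}{|y|^{N+2s}}\,dy
\;\le\; C\!\int\frac{|f(x'-y)|+|f(x-y)|}{|y|^{N+2s}}\,dy
\;\le\; C\|f\|_{L^1_s},
\]
which carries no positive power of $R$. ``Balancing against the $L^\infty$ bound'' then interpolates between two $O(1)$ quantities and yields no H\"older modulus at all---so $\beta$ cannot arise this way.

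What the paper actually does is different in two respects. First, it keeps the integral in the original variables and writes $I_s(\eta,f)(x)-I_s(\eta,f)(x')$ so that the dangerous far-field $f(y)$ contributions always appear multiplied either by the small increment $\eta(x)-\eta(x')$ or by the kernel difference $|x-y|^{-N-2s}-|x'-y|^{-N-2s}$; both carry an explicit factor of $R$ and thus tame the $L^1_s$ tail. Second, it splits the $y$-integration not at scale $R$ but at a \emph{free} scale $\rho$, and then optimises: the near part over $B_\rho(x')$ gives $\rho^{\alpha+1-2s}$, while the far part gives terms of order $R^{\alpha}\rho^{-2s}$ (and better). Equating these forces $\rho=R^{\alpha/(\alpha+1)}=R^{\beta}$, and both sides then equal $R^{(\alpha+1-2s)\beta}=R^{\alpha-2\beta s}=R^{\gamma}$. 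So $\beta$ is the exponent of the splitting radius, not an interpolation parameter between a H\"older and an $L^\infty$ bound. Your proposal would be repaired by (i) abandoning the translated $J_i$ scheme in favour of a decomposition that isolates $\eta(x)-\eta(x')$ and the kernel difference in the far field, and (ii) introducing the free scale $\rho$ and optimising.
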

\begin{proof}
	Let $B_r\Subset B$ with $0<r'<r$. Then $f\in C^{0,\alpha}(B_r)$. Since
	$$
	\left\|\eta I_s(\eta,f)\right\|_{C^{0,\gamma}(\R^N)}\le\left\|\eta\right\|_{C^{0,\gamma}(\R^N)}\left\|I_s(\eta,f)\right\|_{L^\infty(B_{r'})}+\left\|\eta\right\|_{L^\infty(\R^N)}\left\|I_s(\eta,f)\right\|_{C^{0,\gamma}(B_{r'})}
	$$
	we reduce to prove that $I_s(\eta,f)\in C^{0,\gamma}(B_{r'})$. By \eqref{eq:stimaIs2} we already know that $I_s(\eta,f)\in L^\infty(B_{r'})$.
	
	Now, let $x,x'\in B_{r'}$, $x\ne x'$. We set $R:=|x-x'|$. We assume that $R<\left(\frac12\right)^{\frac{1}{1-\beta}}$ otherwise the proof is done. We observe that thanks to the required upper bound on $R$ the following inclusions of sets
	$$B_{R^\beta}(x')\subset B_{\frac 32R^\beta}(x)\quad\text{and}\quad B^c_{R^\beta}(x')\subset B^c_{\frac{R^\beta}{2}}(x),$$
	hold true.
	We write
	\begin{align}
	\label{eq:stimaB}
	|I_s(\eta,f)(x)-&I_s(\eta,f)(x')|\\
	&=\left|\int_{\R^N}\frac{(f(x)-f(y))(\eta(x)-\eta(y))}{|x-y|^{N+2s}}dy-\int_{\R^N}\frac{(f(x')-f(y))(\eta(x')-\eta(y))}{|x'-y|^{N+2s}}dy\right| 
	\\ \nonumber
	\leq&\int_{B_{R^\beta}(x')}\frac{|f(x)-f(y)||\eta(x)-\eta(y)|}{|x-y|^{N+2s}}dy
	\\ \nonumber
	&+\int_{B_{R^\beta}(x')}\frac{|f(x')-f(y)||\eta(x')-\eta(y)|}{|x'-y|^{N+2s}}dy 
	\\ \nonumber
	&+\left|\int_{B^c_{R^\beta}(x')}\frac{(f(x)-f(y))(\eta(x)-\eta(y))-\frac{|x-y|^{N+2s}}{|x'-y|^{N+2s}}(f(x')-f(y))(\eta(x')-\eta(y))}{|x-y|^{N+2s}}dy\right|
	\\ \nonumber
	\leq&\int_{B_{\frac32R^\beta}(x)}\frac{|f(x)-f(y)||\eta(x)-\eta(y)|}{|x-y|^{N+2s}}dy
	\\ \nonumber
	&+\int_{B_{R^\beta}(x')}\frac{|f(x')-f(y)||\eta(x')-\eta(y)|}{|x'-y|^{N+2s}}dy 
	\\ \nonumber
	&+\left|\int_{B^c_{R^\beta}(x')}\frac{(f(x)-f(y))(\eta(x)-\eta(y))-\frac{|x-y|^{N+2s}}{|x'-y|^{N+2s}}(f(x')-f(y))(\eta(x')-\eta(y))}{|x-y|^{N+2s}}dy\right|
	\\ \nonumber
	\leq& C\left[f\right]_{C^{0,\alpha}(B_{r})}R^{(\alpha-2s+1)\beta}
	\\ \nonumber
	&+\left|\int_{B^c_{R^\beta}(x')}\frac{(f(x)-f(y))(\eta(x)-\eta(y))-\frac{|x-y|^{N+2s}}{|x'-y|^{N+2s}}(f(x')-f(y))(\eta(x')-\eta(y))}{|x-y|^{N+2s}}dy\right|
	\\ \nonumber
	=&:C\left[f\right]_{C^{0,\alpha}(B_{r})}R^{(\alpha-2s+1)\beta}+|B|.
	\end{align}
	By adding and subtracting $\frac{(f(x')-f(y))(\eta(x')-\eta(y))}{|x-y|^{N+2s}}$ to the integrand defining $B$ we have 
	\begin{align*}
	|B|\leq&\left|\int_{B^c_{R^\beta}(x')}\frac{(f(x)-f(y))(\eta(x)-\eta(y))-(f(x')-f(y))(\eta(x')-\eta(y))}{|x-y|^{N+2s}}dy\right| 
	\\
	&+\int_{B^c_{R^\beta}(x')}\left|\frac{1}{|x-y|^{N+2s}}-\frac{1}{|x'-y|^{N+2s}}\right||f(x')-f(y)||\eta(x')-\eta(y)|dy
	\\
	\leq&\int_{B^c_{R^\beta}(x')}\frac{|f(x)\eta(x)-f(x')\eta(x')|}{|x-y|^{N+2s}}dy+\int_{B^c_{R^\beta}(x')}\frac{|f(y)||\eta(x)-\eta(x')|}{|x-y|^{N+2s}}dy
	\\
	&+\int_{B^c_{R^\beta}(x')}\frac{|\eta(y)||f(x)-f(x')|}{|x-y|^{N+2s}}dy
	\\
	&+\int_{B^c_{R^\beta}(x')}\left|\frac{1}{|x-y|^{N+2s}}-\frac{1}{|x'-y|^{N+2s}}\right||f(x')-f(y)||\eta(x')-\eta(y)|dy
	\\
	\leq&\int_{B^c_{\frac{R^\beta}{2}}(x)}\frac{|f(x)\eta(x)-f(x')\eta(x')|}{|x-y|^{N+2s}}dy+\int_{B^c_{\frac{R^\beta}{2}}(x)}\frac{|f(y)||\eta(x)-\eta(x')|}{|x-y|^{N+2s}}dy
	\\
	&+\int_{B^c_{\frac{R^\beta}{2}}(x)}\frac{|\eta(y)||f(x)-f(x')|}{|x-y|^{N+2s}}dy
	\\
	&+\int_{B^c_{R^\beta}(x')}\left|\frac{1}{|x-y|^{N+2s}}-\frac{1}{|x'-y|^{N+2s}}\right||f(x')-f(y)||\eta(x')-\eta(y)|dy
	\\
	=&:B_1+B_2+B_3+B_4.
	\end{align*}
	Now we estimate $B_i$ for any $i\in\{1,2,3,4\}$.
	\begin{equation}
	\label{eq:stimaB1}
	B_1\le C[\eta u]_{C^{0,\alpha}(\R^N)}R^\alpha\int_{B^c_{R^\beta/2}(x)}\frac{dy}{|x-y|^{N+2s}}\le C[\eta u]_{C^{0,\alpha}(\R^N)}R^{\alpha-2\beta s}.
	\end{equation}
	\begin{equation}
	\label{eq:stimaB2}
	\begin{split}
	B_2&\le CR\left(\int_{B^c_{R^\beta/2}(x)\cap B_r}\frac{|f(y)|}{|x-y|^{N+2s}}dy+\int_{B^c_{R^\beta/2}(x)\cap B^c_r}\frac{|f(y)|}{|x-y|^{N+2s}}dy\right)\\
	&\le CR\left(\left\|f\right\|_{L^\infty(B_r)}\int_{B^c_{R^\beta/2}(x)}\frac{dy}{|x-y|^{N+2s}}+C(r,r')\int_{B^c_r}\frac{|f(y)|}{1+|y|^{N+2s}}dy\right)\\
	&\le CR\left(C_1\left\|f\right\|_{L^\infty(B_r)}R^{-2\beta s}+C(r,r')\left\|f\right\|_{L^1_s(\R^N)}\right)\\
	&\le C(r,r',N,s)\left(\left\|f\right\|_{L^\infty(B_r)}+\left\|f\right\|_{L^1_s(\R^N)}\right)R^{1-2\beta s}\\
	&\le C(r,r',N,s)\left(\left\|f\right\|_{L^\infty(B_r)}+\left\|f\right\|_{L^1_s(\R^N)}\right)R^{\alpha-2\beta s}.
	\end{split}
	\end{equation}
	\begin{equation}
	\label{eq:stimaB3}
	B_3\le C\left[f\right]_{C^{0,\alpha}(B_{r'})}R^\alpha\int_{B^c_{R^\beta/2}(x)}\frac{dy}{|x-y|^{N+2s}}\le C\left[f\right]_{C^{0,\alpha}(B_{r'})}R^{\alpha-2\beta s}.
	\end{equation}
	For $B_4$ we write
	\begin{align*}
	B_4=&\int_{B^c_{R^\beta}(x')\cap B_r}\left|\frac{1}{|x-y|^{N+2s}}-\frac{1}{|x'-y|^{N+2s}}\right||f(x')-f(y)||\eta(x')-\eta(y)|dy 
	\\
	&+\int_{B^c_{R^\beta}(x')\cap B^c_r}\left|\frac{1}{|x-y|^{N+2s}}-\frac{1}{|x'-y|^{N+2s}}\right||f(x')-f(y)||\eta(x')-\eta(y)|dy 
	\\
	=&:D_{1}+D_{2}.
	\end{align*}
	Using the fundamental Theorem of Calculus we write
	$$
	\left|\frac{1}{|x-y|^{N+2s}}-\frac{1}{|x'-y|^{N+2s}}\right|\le C_{N,s}|x-x'||x'-y|^{-N-2s-1}.
	$$
	Therefore
	\begin{equation}
	\label{eq:stimaD1}
	\begin{split}
	D_{1}&\le C\left[f\right]_{C^{0,\alpha}(B_{r'})}R\int_{B^c_{R^\beta}(x')}|x'-y|^{\alpha+1}|x'-y|^{-N-2s-1}dy\\
	&=C\left[f\right]_{C^{0,\alpha}(B_{r'})}R^{1+\beta(\alpha-2s)}\\
	&\le C\left[f\right]_{C^{0,\alpha}(B_{r'})}R^{(\alpha-2s+1)\beta}.
	\end{split}
	\end{equation}
	To conclude for $D_{2}$ we have
	\begin{equation}
	\label{eq:stimaD2}
	\begin{split}
	D_{2}&\le CR\int_{B^c_{R^\beta}(x')\cap B^c_r}|f(x')-f(y)||x'-y||x'-y|^{-N-2s-1}dy\\
	&\le CR\left(\left\|f\right\|_{L^\infty(B_{r'})}\int_{B_{R^\beta}(x')}\frac{dy}{|x'-y|^{N+2s}}+\int_{B^c_r}\frac{|f(y)|}{|x'-y|^{N+2s}}dy\right)\\
	&\le CR\left(C_1\left\|f\right\|_{L^\infty(B_{r'})}R^{-2\beta s}+C(r,r')\int_{\R^N}\frac{|f(y)|}{1+|y|^{N+2s}}dy\right)\\
	&\le C(r,r',N,s)\left(\left\|f\right\|_{L^\infty(B_{r'})}+\left\|f\right\|_{L^1_s(\R^N)}\right)R^{1-2\beta s}\\
	&\le C(r,r',N,s)\left(\left\|f\right\|_{L^\infty(B_{r'})}+\left\|f\right\|_{L^1_s(\R^N)}\right)R^{\alpha-2\beta s}.
	\end{split}
	\end{equation}
	Putting \eqref{eq:stimaB1}, \eqref{eq:stimaB2}, \eqref{eq:stimaB3}, \eqref{eq:stimaD1},\eqref{eq:stimaD2} into \eqref{eq:stimaB} and taking into account that $(\alpha-2s+1)\beta=\alpha-2\beta s$ by the choice of $\beta$, our claim is proved.
	\end{proof}
\begin{theorem}
	\label{th:pointwiseholder}
	Let $u\in C^{0,\gamma}_{\text{loc}}(B)\cap L^1_s(\R^N)$ for some $\gamma\in(0,1)$ a very weak $s$-harmonic function in $B$. Then $u\in C^{\gamma+2s}_{\text{loc}}(B)\cap L^1_s(\R^N)$ hence $u$ is $s$-harmonic in the classical sense in $B$.
\end{theorem}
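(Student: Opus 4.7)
I follow the fifth step of the programme announced in the introduction. Fix a cut-off $\eta\in C^\infty_c(B)$ with $\eta\equiv 1$ on $B_{1-4\delta}$ and supported in $B_{1-2\delta}$; the computation at the end of the proof of Theorem~\ref{th:sobolevregularity} shows that the compactly supported function $w:=\eta^2u$ solves
\[
(-\Delta)^s w=f\quad\text{in }\R^N,\qquad f=2\eta u(-\Delta)^s\eta-I_s(\eta,\eta u)-\eta I_s(\eta,u),
\]
cf.~\eqref{eq:explicitf}. The plan is to show that $f\in C^{0,\gamma}(\R^N)$ and then to apply the classical Schauder estimate for the fractional Laplacian to the compactly supported $w$ to obtain $w\in C^{\gamma+2s}(\R^N)$; since $\eta\equiv 1$ on $B_{1-4\delta}$ and $\delta$ is arbitrarily small, this gives $u\in C^{\gamma+2s}_{\text{loc}}(B)$.

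The Hölder regularity of $f$ is exactly where Propositions~\ref{prop:estimatesIs} and~\ref{prop:estimateetaIs} enter. The summand $2\eta u(-\Delta)^s\eta$ is the product of a $C^\infty$ compactly supported function with $u\in C^{0,\gamma}_{\text{loc}}(B)$, and hence belongs to $C^{0,\gamma}(\R^N)$. For the nonlocal summands, by symmetry of $I_s$, Proposition~\ref{prop:estimatesIs} applied with $f=\eta u\in C^{0,\gamma}$ and $g=\eta\in C^{0,1}$, and Proposition~\ref{prop:estimateetaIs} applied with $f=u$, yield $I_s(\eta,\eta u),\ \eta I_s(\eta,u)\in C^{0,\tilde\gamma}(\R^N)$ for some $\tilde\gamma=\tilde\gamma(\gamma,s)>0$, at least when $\gamma\in(s,\min\{2s,1\})$. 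If $\tilde\gamma<\gamma$, a preliminary bootstrap is required: apply Schauder with the weaker exponent $\min\{\gamma,\tilde\gamma\}$, improve the local Hölder regularity of $u$, and re-run the propositions; since each pass raises the working exponent by a definite amount of $2s$, finitely many iterations place us in the regime where the nonlocal exponents are no longer the bottleneck, and the bottleneck becomes the first summand $2\eta u(-\Delta)^s\eta$, which remains of exact order $\gamma$. This gives $f\in C^{0,\gamma}(\R^N)$ as required.

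Once $f\in C^{0,\gamma}(\R^N)$ and $w$ is compactly supported, the fractional Schauder-type estimate produces $w\in C^{\gamma+2s}(\R^N)$, hence $u\in C^{\gamma+2s}_{\text{loc}}(B)$. Together with the standing assumption $u\in L^1_s(\R^N)$ and the fact that $\gamma+2s>2s$, this makes the singular integral in~\eqref{eq:fractionallaplacian} pointwise convergent at every $x\in B$, so $(-\Delta)^s u$ is classically defined on $B$. Testing the very weak identity~\eqref{eq:defveryweaksol} against arbitrary $\varphi\in C^\infty_c(B)$ and integrating by parts (now legitimate thanks to the achieved pointwise regularity) finally forces $(-\Delta)^s u\equiv 0$ in $B$ in the classical sense.

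The main obstacle is the bookkeeping of the bootstrap: Propositions~\ref{prop:estimatesIs}--\ref{prop:estimateetaIs} require the input Hölder exponent to lie in the narrow admissible window $(s,\min\{2s,1\})$, so one has to treat separately the cases $s\le\tfrac12$ and $s>\tfrac12$ and monitor when an iterate would exceed $1$, in which case one works with first-order derivatives of $u$ and repeats the argument on the differentiated equation. The rest is a standard combination of the tools developed earlier in the paper.
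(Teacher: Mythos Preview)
Your overall plan---localise to $w=\eta^2u$, show the right-hand side $f=2\eta u(-\Delta)^s\eta-I_s(\eta,\eta u)-\eta I_s(\eta,u)$ is H\"older via Propositions~\ref{prop:estimatesIs}--\ref{prop:estimateetaIs}, then apply fractional Schauder---is exactly the paper's proof. The one substantive divergence is how the working exponent is placed in the admissible window $(s,\min\{2s,1\})$ required by those propositions. The paper does this in a single stroke by invoking Theorem~\ref{th:BrLiSc}: since $u\in H^{2s}_{\rm loc}(B)\cap L^1_s(\R^N)$ from the earlier steps of the programme, the De~Giorgi-type estimate gives $u\in C^{0,\alpha}_{\rm loc}(B)$ for \emph{every} $\alpha\in(0,\min\{2s,1\})$, so one simply picks $\alpha$ in the window and proceeds---no bootstrap. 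The ``for some $\gamma$'' in the statement is read existentially: the $\gamma$ that ends up in the conclusion is the output exponent furnished by the two propositions, and that suffices for the only real goal, namely making $(-\Delta)^su$ pointwise well-defined. The case $\gamma+2s\in\N$ is handled by perturbing the chosen $\alpha$.

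Your bootstrap instead aims at a \emph{prescribed} $\gamma$, which is more than needed and creates avoidable trouble. Two concrete issues: (i) the claim that $2\eta u(-\Delta)^s\eta$ ``remains of exact order $\gamma$'' is off, since after one pass $u$ itself is in $C^{\tilde\gamma+2s}_{\rm loc}$ and this term improves along with it; (ii) iterating Propositions~\ref{prop:estimatesIs}--\ref{prop:estimateetaIs} alone stalls, because their output exponents are bounded independently of the input regularity (e.g.\ Proposition~\ref{prop:estimateetaIs} never yields better than $(\alpha-2s+1)\alpha/(\alpha+1)$ with $\alpha<\min\{2s,1\}$), so reaching a large target $\gamma$ would genuinely require the differentiation step you mention at the end. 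All of this is bypassed by the single appeal to Theorem~\ref{th:BrLiSc}.
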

\begin{proof}
	Let $\eta\in C^\infty_c(B)$. By Theorem \ref{th:sobolevregularity} the function $\eta^2 u$ is a weak solution of $(-\Delta)^sw=f$ in $\R^N$ with $f:=2\eta u(-\Delta)^s\eta-I_s(\eta,\eta u)-\eta I_s(\eta,u)$. Moreover by applying Theorem \ref{th:BrLiSc} with $\alpha\in(s,\min\{2s,1\})$ and Propositions \ref{prop:estimatesIs}, \ref{prop:estimateetaIs} we have that $f\in C^{0,\gamma}(\R^N)$. Therefore by using Schauder estimates for bounded weak solutions to $(-\Delta)^sw=f$ (see \cite[Proposition 2.8]{sil} or \cite[Theorem 15]{stinga}) it follows that $\eta^2 u\in C^{0,\gamma+2s}(\R^N)$ if $0<\gamma+2s<1$ or $\eta^2 u\in C^{1,\gamma+2s-1}(\R^N)$ if $1<\gamma+2s<2$ or  $\eta^2 u\in C^{2,\gamma+2s-2}(\R^N)$ if $2<\gamma+2s<3$. If $\gamma+2s\in\N$ apply Propositions \ref{prop:estimatesIs}, \ref{prop:estimateetaIs} replacing $\alpha$ with $\alpha_1$ such that $\alpha_1+2s\notin\N$.
	By the arbitrariness of $\eta$ we get the thesis.
	\end{proof}
	
\begin{remark}
	Notice that when $s\in\left(0,\frac 12\right)$ Theorem \ref{th:pointwiseholder} also follows from \cite[Theorem 3.5]{BDLMbS} applied with $p=2$.
\end{remark}
\begin{theorem}
	\label{th:analyticity}
	Let $u\in C^{\gamma+2s}_{\text{loc}}(B)\cap L^1_s(\R^N)$ for some $\gamma\in(0,1)$ a very weak $s$-harmonic function in $B$. Then $u$ is real analytic in $B$.
\end{theorem}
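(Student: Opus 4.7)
The plan is to represent $u$ pointwise via the fractional Poisson kernel on a small ball and then to differentiate under the integral sign, exploiting the real analyticity of the Poisson kernel in its inner variable. By Theorem \ref{th:pointwiseholder} the function $u$ is classically $s$-harmonic in $B$ and still belongs to $L^1_s(\R^N)$, so for every $x_0 \in B$ and every radius $\rho$ with $B_\rho(x_0) \Subset B$ we may write
\[
u(x) = \int_{B_\rho^c(x_0)} P_\rho(x - x_0, y - x_0)\, u(y)\, dy, \qquad x \in B_\rho(x_0),
\]
where
\[
P_\rho(\xi, \eta) = c_{N,s}\left(\frac{\rho^2 - |\xi|^2}{|\eta|^2 - \rho^2}\right)^{\!s} \frac{1}{|\xi - \eta|^N}
\]
is the fractional Poisson kernel, see \cite{BogByc, bucur, rofr}. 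The integral converges absolutely because $P_\rho(\xi,\eta)$ behaves like $|\eta|^{-N-2s}$ as $|\eta| \to \infty$, which is precisely the $L^1_s$ weight.

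The key observation is that for every fixed $\eta$ with $|\eta| > \rho$ the map $\xi \mapsto P_\rho(\xi,\eta)$ extends holomorphically to a complex polydisc around any $\xi \in B_\rho$; each of the three factors $(\rho^2 - |\xi|^2)^s$, $(|\eta|^2 - \rho^2)^{-s}$ and $|\xi - \eta|^{-N}$ extends to such a polydisc, with polyradius comparable to the distance of $\xi$ to $\partial B_\rho$ and to $|\eta| - \rho$. A Cauchy estimate on the polydisc, combined with the Leibniz rule for higher order derivatives, then yields a pointwise bound of the form
\[
\left| D^\iota_\xi P_\rho(\xi, \eta) \right| \le c^{|\iota|}\, \iota!\; K(\rho, r_0, N, s)\, \frac{1}{1 + |\eta|^{N+2s}}
\]
for $\xi$ in a smaller ball of radius $r_0 < \rho$, uniformly in $\eta \in B_\rho^c$. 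The factor $1/(1 + |\eta|^{N+2s})$ on the right accommodates both the singular behaviour of $P_\rho$ as $|\eta| \to \rho^+$ and its decay as $|\eta| \to \infty$.

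Given this uniform estimate, dominated convergence legitimizes differentiation under the integral sign in the Poisson representation, so that
\[
D^\iota u(x) = \int_{B_\rho^c(x_0)} D^\iota_x P_\rho(x - x_0, y - x_0)\, u(y)\, dy.
\]
Splitting the domain of integration into the near-field annulus $B_R \setminus B_\rho(x_0)$, to be controlled by $\|u\|_{L^\infty(B_R)}$ (which is finite by Theorem \ref{th:pointwiseholder}), and the far-field $B_R^c$, to be controlled by $\|u\|_{L^1_s(\R^N)}$, and then covering $\overline{B_{r_0}}$ by finitely many balls $B_\rho(x_0) \subset B_R$, one gets
\[
\|D^\iota u\|_{L^\infty(B_{r_0})} \le c^{|\iota|}\, \iota!\; C(R, r_0, N, s)\, \bigl( \|u\|_{L^\infty(B_R)} + \|u\|_{L^1_s(\R^N)} \bigr)
\]
for every multi-index $\iota \in \N_0^N$. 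This is exactly the quantitative statement of real analyticity of $u$ at every point of $B$.

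The hard part will be the uniform-in-$\eta$ Cauchy estimate on $D^\iota_\xi P_\rho(\xi, \eta)$: one has to choose the polydisc of holomorphic extension so that its polyradius depends on $\eta$ only through the tame factor $\min\{\rho - |\xi|, |\eta| - \rho\}$, and then to control the combinatorial explosion produced by Leibniz on the product of the three factors. Once this uniform estimate is in place, the splitting of the integral and the covering argument are routine bookkeeping, and the factorial growth $c^{|\iota|}\iota!$ survives the passage to $D^\iota u$.
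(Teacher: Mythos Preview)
Your strategy coincides with the paper's: represent $u$ via the fractional Poisson kernel and differentiate under the integral sign. However, the derivative bound you state on $P_\rho$ is false as written. The estimate
\[
|D^\iota_\xi P_\rho(\xi,\eta)| \le c^{|\iota|}\iota!\,K(\rho,r_0,N,s)\,\frac{1}{1+|\eta|^{N+2s}}
\]
cannot hold uniformly for $\eta\in B_\rho^c$: already for $|\iota|=0$ the left-hand side carries the factor $(|\eta|^2-\rho^2)^{-s}$, which blows up as $|\eta|\to\rho^+$, while your right-hand side stays bounded there. The sentence claiming that the weight ``accommodates the singular behaviour'' is therefore incorrect; and if instead you shrink the polyradius to $\min\{\rho-|\xi|,\,|\eta|-\rho\}$ as you propose in the last paragraph, the Cauchy constant becomes $(c/(|\eta|-\rho))^{|\iota|}$ and uniformity in $\eta$ is lost.

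The fix is simple and is exactly what the paper does. The singular factor $(|\eta|^2-\rho^2)^{-s}$ is \emph{independent of} $\xi$, hence it imposes no constraint on the holomorphic extension in $\xi$; the polyradius may be taken $\sim\rho-r_0$ uniformly in $\eta$. The correct bound is then
\[
|D^\iota_\xi P_\rho(\xi,\eta)| \le c^{|\iota|}\iota!\,C(\rho,r_0,N,s)\,P_\rho(\xi,\eta),
\]
keeping the full kernel on the right. With this in hand your near-field/far-field splitting works: on the annulus near $\partial B_\rho$ use $\|u\|_{L^\infty}$ together with $\int_{B_\rho^c}P_\rho(\xi,\cdot)\,d\eta=1$, while away from $\partial B_\rho$ the kernel is genuinely dominated by $C/(1+|\eta|^{N+2s})$ and the $L^1_s$ norm controls the tail. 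The paper runs this argument on concentric balls centred at the origin, so no covering is needed.
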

\begin{proof}
	Let $\delta\in(0,1)$ and $R=R(\delta):=1-\frac\delta4$, $r=r(\delta):=1-\frac\delta2$, $r_0=r_0(\delta):=1-\delta$. By Theorem \ref{th:pointwiseholder} $u$ is a classical solution of
	\begin{equation}
	\label{eq:pointwisesharmonic}
		\begin{cases}
	(-\Delta)^sw=0\quad\text{in}\quad B_{r} \\
	w=h\quad\text{in}\quad B^c_{r},
	\end{cases}
	\end{equation}
	where
		\begin{equation*}
	h:=\begin{cases}
	u\quad\text{in}\quad B \\
	g\quad\text{in}\quad B^c.
	\end{cases}
	\end{equation*}
	Since $h\in C(\overline{B_R})\cap L^1_s(\R^N)$ if for $\rho>0$ we set $$P_\rho(x,y):=C_{N,s}\left(\frac{\rho^2-|x|^2}{|y|^2-\rho^2}\right)^s\frac{1}{|x-y|^N}$$ the fractional Poisson kernel (see \cite{bucur}) for $B_\rho$ for any $x\in B_\rho$ and $y\in B_\rho^c$, the function 
	\begin{equation}
	\label{eq:poissonkernelsolution}
	u_h(x):=\int_{B_{r}^c}P_{r}(x,y)h(y)dy
	\end{equation} is well posed for every $x\in B_{r}$. Indeed
	\begin{equation}
	\label{eq:estimatepoissonkernelsolution}
	\begin{split}
	|u_h(x)|&\le\int_{B_R\setminus B_{r}}P_{r}(x,y)|h(y)|dy+\int_{B\setminus B_R}P_{r}(x,y)|h(y)|dy+\int_{B^c}P_{r}(x,y)|h(y)|dy \\
	&=\int_{B_R\setminus B_{r}}P_{r}(x,y)|u(y)|dy+\int_{B\setminus B_R}P_{r}(x,y)|u(y)|dy+\int_{B^c}P_{r}(x,y)|g(y)|dy \\
	&\le\left\|u\right\|_{L^\infty(B_R)}\int_{B_{r}^c}P_{r}(x,y)dy+\frac{r^{2s}}{(R^2-r^2)^s(R-r)^N}\left\|u\right\|_{L^1(B)}+C_{r,R,N,s}\int_{\R^N}\frac{|u(y)|}{1+|y|^{N+2s}}dy\\
	&\le C(r,R,N,s)\left(\left\|u\right\|_{L^\infty(B_R)}+\left\|u\right\|_{L^1_s(\R^N)}\right).
	\end{split}
	\end{equation}
	By \cite[Theorem 2.10.]{bucur}, the function $u_h$ is a classical solution of \eqref{eq:pointwisesharmonic}. By uniqueness of solutions of \eqref{eq:pointwisesharmonic} (see \cite[Theorem 3.3.2.]{bucval}) we conclude that $u_h=u$ in $B_{r}$. 
	

Moreover, for every $y\in B_r^c$ the function
$$
B_{r_0}\ni x\mapsto\frac{(r^2-|x|^2)^s}{|x-y|^N}
$$
is smooth, and it's easy to check that
\begin{equation}
\label{eq:estimatespoissonkernel}
|(\partial_x^\iota P_r)(x,y)|\le c^{|\iota|}\iota! C(r,r_0,N,s)P_r(x,y)
\end{equation}
for every $x\in B_{r_0}$, $y\in B^c_r$ and $\iota\in\N_0^N$.
	Therefore by differentiating under integral sign formula \eqref{eq:poissonkernelsolution} by estimates \eqref{eq:estimatepoissonkernelsolution}, \eqref{eq:estimatespoissonkernel} we have 
	$$
	\left\|D^\iota u\right\|_{L^\infty(B_{r_0})}\le c^{|\iota|}\iota!C(R,r,r_0,N,s)\left(\left\|u\right\|_{L^\infty(B_R)}+\left\|u\right\|_{L^1_s(\R^N)}\right)
	$$
	for any $\iota\in\N^N_0$.
	From the arbitrariness of $\delta\in(0,1)$ we get the thesis.
	\end{proof}
\begin{bibdiv}
	\begin{biblist}
		
		\bib{abatangelo}{article}{
			author={Abatangelo, N.},
			title={Large $s$-harmonic functions and boundary blow-up solutions for
				the fractional Laplacian},
			journal={Discrete Contin. Dyn. Syst.},
			volume={35},
			date={2015},
			number={12},
			pages={5555--5607},
		}

	\bib{abaval}{article}{
		author={Abatangelo, N.},
		author={Valdinoci, E.},
		title={Getting acquainted with the fractional Laplacian},
		journal={Springer INdAM Series},
		date={2019},
	}
		
		\bib{afly}{article}{
			author={Abdellaoui, B.},
			author={Fern\'{a}ndez, A. J.},
			author={Leonori, T.},
			author={Younes, A.},
			title={Global fractional Calder\'on-Zygmund type regularity},
			journal={To appear on Communications in Contemporary Mathematics},
			date={2024},
			eprint={https://arxiv.org/pdf/2107.06535.pdf},
		}
	
	\bib{abdperal}{article}{
		author={Abdellaoui, B.},
		author={Peral, I.},
		title={Towards a deterministic KPZ equation with fractional diffusion: the stationary problem},
		journal={Nonlinearity},
		date={2016},
		volume={31},
		pages={1260--1298},
	}

	\bib{adams}{book}{
	author={Adams, R. A.},
	title={Sobolev spaces},
	series={Pure and Applied Mathematics, Vol. 65},
	publisher={Academic Press [Harcourt Brace Jovanovich, Publishers], New
		York-London},
	date={1975},
	pages={xviii+268},
}

\bib{AFV}{article}{
	author={Albanese, G.},
	author={Fiscella, A.},
	author={Valdinoci, E.},
	title={Gevrey regularity for integro-differential operators},
	journal={J. Math. Anal. Appl.},
	volume={428},
	date={2015},
	number={2},
	pages={1225--1238},
}

		\bib{BWZ}{article}{
			author={Biccari, U.},
			author={Warma, M.},
			author={Zuazua, E.},
			title={Local elliptic regularity for the Dirichlet fractional Laplacian},
			journal={Adv. Nonlinear Stud.},
			volume={17},
			date={2017},
			number={2},
			pages={387--409},
		}
	
	\bib{BWZa}{article}{
		author={Biccari, U.},
		author={Warma, M.},
		author={Zuazua, E.},
		title={Addendum: Local elliptic regularity for the Dirichlet fractional
			Laplacian [MR3641649]},
		journal={Adv. Nonlinear Stud.},
		volume={17},
		date={2017},
		number={4},
		pages={837--839},
	}
	
	\bib{BogByc}{article}{
		author={Bogdan, K.},
		author={Byczkowski, T.},
		title={Potential theory for the $\alpha$-stable Schr\"{o}dinger operator
			on bounded Lipschitz domains},
		journal={Studia Math.},
		volume={133},
		date={1999},
		number={1},
		pages={53--92},
	}

\bib{BBKRSV}{collection}{
	author={Bogdan, K.},
	author={Byczkowski, T.},
	author={Kulczycki, T.},
	author={Ryznar, M.},
	author={Song, R.},
	author={Vondra\v{c}ek, Z.},
	title={Potential analysis of stable processes and its extensions},
	series={Lecture Notes in Mathematics},
	volume={1980},
	editor={Graczyk, Piotr},
	editor={Stos, Andrzej},
	note={Edited by Piotr Graczyk and Andrzej Stos},
	publisher={Springer-Verlag, Berlin},
	date={2009},
}

\bib{BDLMbS}{article}{
	author={B\"ogelein, V.},
	author={Duzaar, F.},
	author={Liao, N.},
	author={Molica Bisci, G.},
	author={Servadei, R.},
	title={Higher regularity theory for $(s,p)$-harmonic functions},
	journal={Atti Accad. Naz. Lincei Rend. Lincei Mat. Appl.},
	volume={35},
	date={2024},
	number={2},
	pages={311--321},
}

\bib{BrLi}{article}{
	author={Brasco, L.},
	author={Lindgren, E.},
	title={Higher Sobolev regularity for the fractional $p$-Laplace equation
		in the superquadratic case},
	journal={Adv. Math.},
	volume={304},
	date={2017},
	pages={300--354},
}

\bib{BrLiSc}{article}{
	author={Brasco, L.},
	author={Lindgren, E.},
	author={Schikorra, A.},
	title={Higher H\"{o}lder regularity for the fractional $p$-Laplacian in
		the superquadratic case},
	journal={Adv. Math.},
	volume={338},
	date={2018},
	pages={782--846},
}
		
		\bib{bucur}{article}{
			author={Bucur, C.},
			title={Some observations on the Green function for the ball in the fractional Laplace framework},
			journal={Communications on Pure and Applied Analysis},
			volume={15},
			date={2016},
			number={2},
			pages={657--699},
		}
	
		\bib{BDV}{article}{
		author={Bucur, C.},
		author={Dipierro, S.},
		author={Valdinoci, E.},
		title={On the mean value property of fractional harmonic functions},
		journal={Nonlinear Anal.},
		volume={201},
		date={2020},
		number={25},
	}

\bib{bucval}{book}{
	author={Bucur, C.},
	author={Valdinoci, E.},
	title={Nonlocal diffusion and applications},
	series={Lecture Notes of the Unione Matematica Italiana},
	volume={20},
	publisher={Springer; Unione Matematica Italiana, Bologna},
	date={2016},
	pages={xii+155},
}

\bib{CafSil}{article}{
	author={Caffarelli, L.},
	author={Silvestre, L.},
	title={Regularity theory for fully nonlinear integro-differential
		equations},
	journal={Comm. Pure Appl. Math.},
	volume={62},
	date={2009},
	number={5},
	pages={597--638},
}

		\bib{CDV19}{book}{	
		author={Carbotti, A.},
		author={Dipierro, S.},
		author={Valdinoci, E.},
		title={Local density of solutions to fractional equations},
		series = {De Gruyter Studies in Mathematics},
		publisher = {De Gruyter},
		date = {2019}
	}

	\bib{cozzi}{article}{
		author={Cozzi, M.},
		title={Interior regularity of solutions of non-local equations in Sobolev and Nikol’skii spaces},
		journal={Annali di Matematica Pura ed Applicata},
		volume={196},
		date={2017},
		pages={555-578},
	}

\bib{DPV}{article}{
	author={Di Nezza, E.},
	author={Palatucci, G.},
	author={Valdinoci, E.},
	title={Hitchhiker's guide to the fractional Sobolev spaces},
	journal={Bulletin Des Sciences Mathematiques},
	volume={136},
	date={2011},
	pages={521-573}
}

\bib{DGV}{article}{
	author={Dipierro, S.},
	author={Giacomin, G.},
	author={Valdinoci, E.},
	title={The fractional Malmheden theorem},
	journal={Mathematics in Engineering},
	date={2022},
}

\bib{DSV17}{article}{
	author={Dipierro, S.},
	author={Savin, O.},
	author={Valdinoci, E.},
	title={All functions are locally $s$-harmonic up to a small error},
	journal={J. Eur. Math. Soc. (JEMS)},
	volume={19},
	date={2017},
	number={4},
	pages={957--966},
}

	\bib{fallweth}{article}{
		author={Fall, M. M.},
		author={Weth, T.},
		title={Liouville theorems for a general class of nonlocal operators},
		journal={Potential Anal.},
		volume={45},
		date={2016},
		number={1},
		pages={187--200},
	}

\bib{rofr}{book}{
	author={Fern\'andez-Real, X.},
	author={Ros-Oton, X.},
	title={Integro-differential elliptic equations},
	publisher={Birkh\"{a}user},
	date={2024},
	pages={viii+335}
}

\bib{grubb}{article}{
	author={Grubb, G.},
	title={Fractional Laplacians on domains, a development of H\"{o}rmander's
		theory of $\mu$-transmission pseudodifferential operators},
	journal={Adv. Math.},
	volume={268},
	date={2015},
	pages={478--528},
}

\bib{kassm07}{article}{
	author={Kassmann, M.},
	title={The theory of De Giorgi for non-local operators},
	language={English, with English and French summaries},
	journal={C. R. Math. Acad. Sci. Paris},
	volume={345},
	date={2007},
	number={11},
	pages={621--624},
}

\bib{Kass}{article}{
	author={Kassmann, M.},
	title={A priori estimates for integro-differential operators with
		measurable kernels},
	journal={Calc. Var. Partial Differential Equations},
	volume={34},
	date={2009},
	number={1},
	pages={1--21},
}

\bib{kassmann}{article}{
	author={Kassmann, M.},
	title={A new formulation of Harnack's inequality for nonlocal operators},
	language={English, with English and French summaries},
	journal={C. R. Math. Acad. Sci. Paris},
	volume={349},
	date={2011},
	number={11-12},
	pages={637--640},
}

\bib{KePoVe}{article}{
	author={Kenig, C. E.},
	author={Ponce, G.},
	author={Vega, L.},
	title={Well-posedness and scattering results for the generalized
		Korteweg-de Vries equation via the contraction principle},
	journal={Comm. Pure Appl. Math.},
	volume={46},
	date={1993},
	number={4},
	pages={527--620},
}

\bib{KMS}{article}{
	author={Kuusi, T.},
	author={Mingione, G.},
	author={Sire, Y.},
	title={Nonlocal self-improving properties},
	journal={Analysis \& PDE},
	volume={8},
	date={2015},
	pages={57-114}
}

\bib{kwasnicki}{article}{
	author={Kwa\'{s}nicki, M.},
	title={Ten equivalent definitions of the fractional Laplace operator},
	journal={Fract. Calc. Appl. Anal.},
	volume={20},
	date={2017},
	number={1},
	pages={7--51},
}
	
\bib{LLS}{article}{
	author={La Manna, D.A.},
	author={Leone, C.},
	author={Schiattarella, R.},
	title={On the regularity of very weak solutions for linear elliptic
		equations in divergence form},
	journal={NoDEA Nonlinear Differential Equations Appl.},
	volume={27},
	date={2020},
	number={5},
	pages={Paper No. 43, 23},
}

\bib{LPPS}{article}{
	author={Leonori, T.},
	author={Peral, I.},
	author={Primo, A.},
	author={Soria, F.},
	title={Basic estimates for solutions of a class of nonlocal elliptic and
		parabolic equations},
	journal={Discrete Contin. Dyn. Syst.},
	volume={35},
	date={2015},
	number={12},
	pages={6031--6068},
}

\bib{Miranda}{book}{
	author={Miranda, C.},
	title={Partial Differential Equations of Elliptic Type},
	series={Ergebnisse der Mathematik und ihrer Grenzgebiete, Band 2 
},
	note={Second revised edition},
	publisher={Springer},
	date={1970},
	pages={xii+370},
}

\bib{MS}{article}{
	author={Mironescu, P.},
	author={Sickel, W.},
	title={A Sobolev non embedding},
	journal={Rendiconti Lincei-matematica E Applicazioni},
	volume={26},
	date={2015},
	pages={291-298}
}

\bib{Nirenberg}{article}{
	author={Nirenberg, L.},
	title={Remarks on strongly elliptic partial differential equations},
	journal={Comm. Pure Appl. Math.},
	volume={8},
	date={1955},
	pages={649--675},
}

\bib{rosotonserra13}{article}{
	author={Ros-Oton, X.},
	author={Serra, J.},
	title={The extremal solution for the fractional Laplacian},
	journal={Calculus of Variations and Partial Differential Equations},
	volume={50},
	date={2013},
	pages={723--750},
}

\bib{Seeley1}{article}{
	author={Seeley, R. T.},
	title={Integro-differential operators on vector bundles},
	journal={Trans. Amer. Math. Soc.},
	volume={117},
	date={1965},
	pages={167--204},
}

\bib{Seeley2}{article}{
	author={Seeley, R. T.},
	title={Refinement of the functional calculus of Calder\'{o}n and Zygmund},
	note={Nederl. Akad. Wetensch. Proc. Ser. A {\bf 68}},
	journal={Indag. Math.},
	volume={27},
	date={1965},
	pages={521--531},
}

\bib{Seeley3}{article}{
	author={Seeley, R. T.},
	title={Complex powers of an elliptic operator},
	conference={
		title={Singular Integrals},
		address={Proc. Sympos. Pure Math., Chicago, Ill.},
		date={1966},
	},
	book={
		publisher={Amer. Math. Soc., Providence, RI},
	},
	date={1967},
	pages={288--307},
}

\bib{SerVal}{article}{
	author={Servadei, R.},
	author={Valdinoci, E.},
	title={Weak and viscosity solutions of the fractional Laplace equation},
	journal={Publ. Mat.},
	volume={58},
	date={2014},
	number={1},
	pages={133--154},
}

\bib{sil}{article}{
	author={Silvestre, L.},
	title={Regularity of the obstacle problem for a fractional power of the
		Laplace operator},
	journal={Comm. Pure Appl. Math.},
	volume={60},
	date={2007},
	number={1},
	pages={67--112},
}


\bib{stinga}{article}{
	author={Stinga, P. R.},
	title={User's guide to the fractional Laplacian and the method of
		semigroups},
	conference={
		title={Handbook of fractional calculus with applications. Vol. 2},
	},
	book={
		publisher={De Gruyter, Berlin},
	},
}

\bib{weyl}{article}{
	author={Weyl, H.},
	title={The method of orthogonal projection in potential theory},
	journal={Duke Mathematical Journal},
	volume={7},
	date={1940},
	pages={411-444}
}

\bib{zhangbao}{article}{
	author={Zhang, W.},
	author={Bao, J.},
	title={Regularity of very weak solutions for elliptic equation of
		divergence form},
	journal={J. Funct. Anal.},
	volume={262},
	date={2012},
	number={4},
	pages={1867--1878},
}
\end{biblist}
\end{bibdiv}
	\end{document}